\documentclass[review]{elsarticle}

\usepackage{lineno,hyperref}
\modulolinenumbers[5]
\usepackage{amssymb}
\usepackage{amsmath,amssymb,amsfonts,amsthm,graphics,
latexsym, amscd, amsfonts, epsfig, eepic,epic,psfrag,natbib,caption}
\usepackage{mathrsfs}
\usepackage{color}
\usepackage{eucal}%    caligraphic-euler fonts: \mathcal{ }
\usepackage{eufrak}%   frak-euler        fonts: \mathfrak{ }
\usepackage[all]{xypic}
\usepackage{xspace}
\usepackage{algorithm}
\usepackage{algorithmic}
\journal{Mathematical Biosciences}

\theoremstyle{plain}
\newtheorem{theorem}{Theorem}
\newtheorem{corollary}{Corollary}

\newtheorem{lemma}{Lemma}

\theoremstyle{definition}
\newtheorem{definition}{Definition}

\theoremstyle{example}

\theoremstyle{remark}

\numberwithin{equation}{section}

%%%%%%%%%%%%%%%%%%%%%%%
%% Elsevier bibliography styles
%%%%%%%%%%%%%%%%%%%%%%%
%% To change the style, put a % in front of the second line of the current style and
%% remove the % from the second line of the style you would like to use.
%%%%%%%%%%%%%%%%%%%%%%%

%% Numbered
%\bibliographystyle{model1-num-names}

%% Numbered without titles
%\bibliographystyle{model1a-num-names}

%% Harvard
%\bibliographystyle{model2-names.bst}\biboptions{authoryear}

%% Vancouver numbered
%\usepackage{numcompress}\bibliographystyle{model3-num-names}

%% Vancouver name/year
%\usepackage{numcompress}\bibliographystyle{model4-names}\biboptions{authoryear}

%% APA style
%\bibliographystyle{model5-names}\biboptions{authoryear}

%% AMA style
%\usepackage{numcompress}\bibliographystyle{model6-num-names}

%% `Elsevier LaTeX' style
\bibliographystyle{elsarticle-num}
%%%%%%%%%%%%%%%%%%%%%%%

\begin{document}

\begin{frontmatter}

\title{On RNA-RNA interaction structures of fixed
       topological genus}
%\tnotetext[mytitlenote]{Fully documented templates are available in the elsarticle package on \href{http://www.ctan.org/tex-archive/macros/latex/contrib/elsarticle}{CTAN}.}

%% Group authors per affiliation:
%\author{Elsevier\fnref{myfootnote}}
%\address{}
%\fntext[myfootnote]{Since 1880.}

%% or include affiliations in footnotes:
\author[mymainaddress]{Benjamin M.M.~Fu}
\ead{benjaminfmm@imada.sdu.dk}

\author[mymainaddress]{Hillary S.W.~Han}
\ead{hillary@imada.sdu.dk}

\author[mymainaddress]{Christian M.~Reidys\corref{mycorrespondingauthor}}
\cortext[mycorrespondingauthor]{Corresponding author}
\ead{duck@santafe.edu}

\address[mymainaddress]{Department of Mathematics and Computer science, 
University of Southern Denmark\\
Campusvej 55, DK-5230 Odense M, Denmark}
\address{}

\begin{abstract}
Interacting RNA complexes are studied via bicellular maps
using a filtration via their topological genus.
Our main result is a new bijection for RNA-RNA interaction structures
and linear time uniform sampling algorithm for RNA complexes of fixed
topological genus.
The bijection allows to either reduce the topological genus of a
bicellular map directly, or to lose connectivity by decomposing the 
complex into a pair of single stranded RNA structures.
Our main result is proved bijectively. It provides an explicit algorithm
of how to rewire the corresponding complexes and an unambiguous 
decomposition grammar. 
Using the concept of genus induction, we construct bicellular maps of fixed 
topological genus $g$ uniformly in linear time. 
We present various statistics on these topological RNA complexes and compare
our findings with biological complexes. Furthermore we show how to 
construct loop-energy based complexes using our decomposition grammar.
\end{abstract}

\begin{keyword}
RNA interaction structure \sep bicellular map \sep
 topological genus \sep genus induction \sep  uniform generation \sep sampling
\MSC[2010] 05A19 \sep 92E10
\end{keyword}

\end{frontmatter}

\linenumbers

\section{Introduction}\label{S:Introduction}

RNA-RNA interactions constitute one of the fundamental mechanisms of
cellular regulation. We find such interactions in a variety of contexts: 
small RNAs binding a larger (m)RNA target including: the regulation 
of translation in both prokaryotes \citep{Vogel:07} and eukaryotes 
\citep{McManus,Banerjee}, the targeting of chemical modifications 
\citep{Bachellerie}, insertion editing \citep{Benne} and transcriptional 
control \citep{Kugel}.

A salient feature is the formation of RNA-RNA interaction structures
that are far more complex than simple sense-antisense interactions.
This is observed for a vast variety of RNA classes including miRNAs,
siRNAs, snRNAs, gRNAs, and snoRNAs.
Thus deeper understanding of RNA-RNA interactions in terms of
the thermodynamics of binding and in its structural consequences is a
necessary prerequisite to understanding RNA-based regulation mechanisms.

An RNA molecule is a linearly oriented sequence of four types of nucleotides,
namely,
{\bf A}, {\bf U}, {\bf C}, and {\bf G}. This sequence is endowed with a
well-defined orientation from the $5'$- to the $3'$-end and referred to as
the backbone.
Each nucleotide can form a base pair by interacting with at most one other
nucleotide by establishing hydrogen bonds. Here we restrict ourselves to
Watson-Crick base pairs {\bf GC} and {\bf AU} as well as the wobble base
pairs {\bf GU}. In the following, base triples as well as other types of more
complex interactions are neglected.

RNA structures can be presented as diagrams by drawing the backbone
horizontally and all base pairs as arcs in the upper half-plane;
see Figure~\ref{F:RNAp}.
This set of arcs provides our coarse-grained RNA structure in
particular ignoring any spatial embedding or geometry of the molecule
beyond its base pairs.
%%%%%%%%%
%%%%%%%%%%%%%%%%%%%%%%%%%%%%%%%%%%%%%%%%%%%%%%%%%%
%%%%%%%%%

%%%%%%%%
 \begin{figure}[ht]
 \begin{center}
 \includegraphics[width=0.9\textwidth]{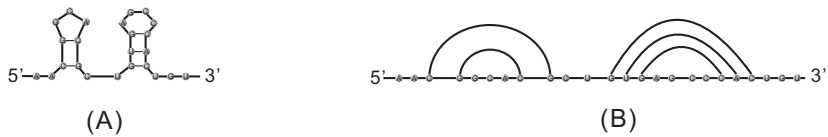}
 \end{center}
 \caption{\small (A) An RNA secondary structure and (B) its diagram
 representation.
 }\label{F:RNAp}
 \end{figure}

%%%%%%%%%
%%%%%%%%%%%%%%%%%%%%%%%%%%%%%%%%%%%%%%%%%%%%%%%%%%
%%%%%%%%%
As a result, specific classes of base pairs translate into distinct
structure categories, the most prominent of which are secondary
structures \citep{Kleitman:70,Nussinov:1978,Waterman:78a,Waterman:79a}.
Represented as diagrams, secondary structures have only non-crossing
base pairs (arcs).
Beyond RNA secondary structures are the RNA pseudoknot structures that
allow for cross serial interactions \citep{Rivas:99}. Once such cross serial
interactions are considered the question of a meaningful filtration arises,
since the folding of unconstrained pseudoknot structures is NP-hard
\citep{Lyngso}. Based on  several earlier studies of the genus of 
a pseudoknot single strand of RNA \citep{vernizzi, vernizzib,bon,andersen2011},
there are several meaningful filtrations of cross-serial
interactions \citep{Orland:02,Reidys:11a,Reidys:10w}.

RNA interaction structures are diagrams over two backbones. Distinguishing
internal and external arcs, the former being arcs within one backbone and
the latter connecting the backbones, interaction structures can be
represented by drawing the two backbones on top of each other, see
Figure~\ref{F:diag_represent}.
%%%%%%%%%
%%%%%%%%%%%%%%%%%%%%%%%%%%%%%%%%%%%%%%%%%%%%%%%%%%
%%%%%%%%%

%%%%%%%%%
 \begin{figure}[ht]
 \begin{center}
 \includegraphics[width=0.9\textwidth]{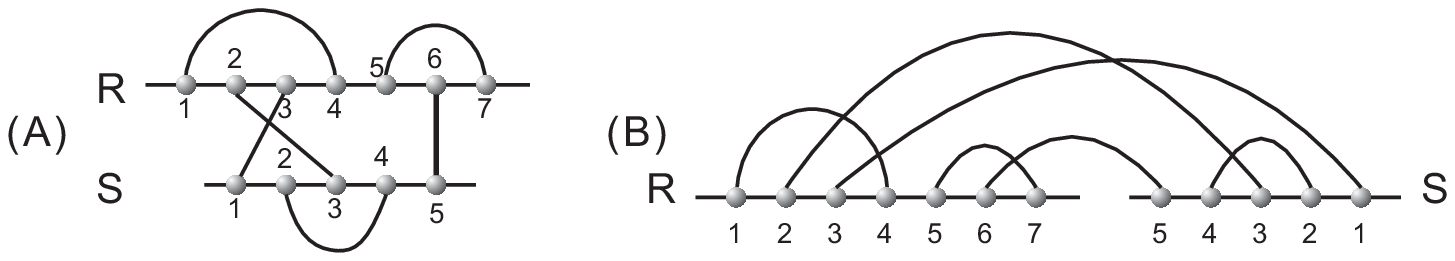}
 \end{center}
 \caption{\small Diagram representation of an RNA-RNA interaction
 structure.
 }\label{F:diag_represent}
 \end{figure}

%%%%%%%%%
%%%%%%%%%%%%%%%%%%%%%%%%%%%%%%%%%%%%%%%%%%%%%%%%%%
%%%%%%%%%

This paper will utilize a topological filtration to categorize 
RNA-complexes. While the basic concept of fat graphs employed here dates 
back to Cayley, the classification and expansion of pseudoknotted RNA 
structures in terms of topological genus of a fat graph or double line graph
were first proposed by \citep{Orland:02} and \citep{Bon:08}.
Fat graphs were applied to RNA secondary structures even earlier in
\citep{Waterman:93} and \citep{Penner:03}.  
The results of \citep{Orland:02} are based on the matrix models and
are conceptually independent. Genus, as well as other 
topological invariants of fat graphs were introduced and studied as 
descriptors of proteins in \citep{protein}.

The approach undertaken here is combinatorial and follows \citep{Fenix:t2}: 
starting with the diagram representation we inflate each edge, including 
backbone edges, into ribbons. As each ribbon has
two sides and specifying a counter-clockwise rotation around each vertex,
we obtain so called boundary cycles with a unique orientation. It is clear
that we have thus constructed a surface and its topological genus
providing the filtration. Naturally there are many such ribbon graphs
that produce the same topological surface (by gluing the two ``complementary''
sides of each ribbon), this is how we obtain the desired equivalence
(complexity) classes of structures.

The idea of genus induction is an extension of the framework of
\citep{chapuy:10,Chapuy:11}, who studied unicellular maps of genus $g$. In
\citep{fenix} a linear time algorithm for uniformly generating RNA structures
of fixed topological genus was presented employing the results of 
\citep{Chapuy:11}. In \citep{fenix-shape} this framework was extended to deal
directly with RNA-shapes, i.e.~enabling the uniform generation of finitely many
shapes for fixed topological genus and to thereby extract key information from
RNA databases.

In this contribution we derive the theory of RNA-RNA interaction structures
by means of a new recursion. In the course of its construction 
we have to deal with the fact that it is not a ``pure''. This means
it involves not only bicellular maps of lower genus but also disjoint pairs of 
unicellular maps. 
An additional novel feature is that our bijection is not {\it always} reducing 
topological genus. In essence we have the following alternative: we either reduce 
genus or we lose connectivity.

 %%%%%%%%%%%%%%%%%%%%%%%%%%%%%%%%%%%%%%%%%%%%%%%%%%
 %%%%%%%%%
 \begin{figure}[ht]
 \begin{center}
 \includegraphics[width=0.7\textwidth]{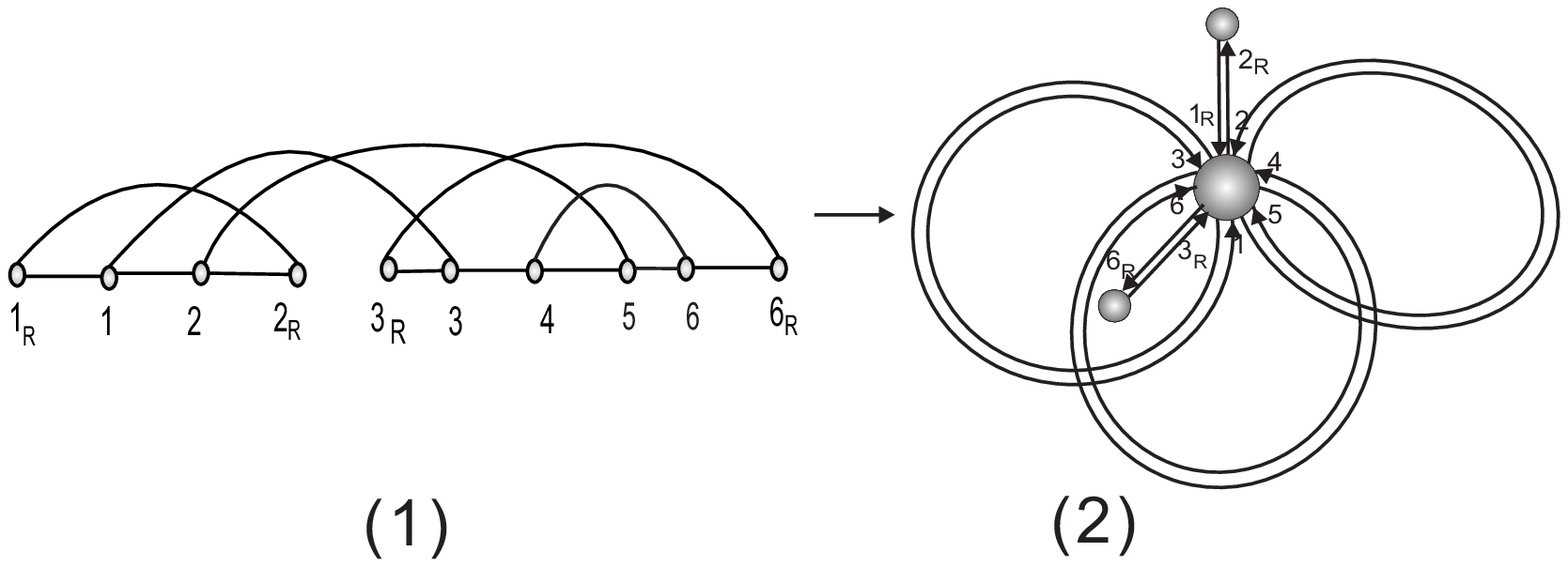}
 \end{center}
 \caption{From a RNA-RNA interaction structures as diagrams to bicellular maps.
 }\label{F:slice1}
 \end{figure}
 %%%%%%%%%
 %%%%
 
 %%%%%%%%%%%%%%%%%%%%%%%%%%%%%%%%%%%%%%%%%%%%%%%%%%
  %%%%%%%%%
  \begin{figure}[ht]
  \begin{center}
  \includegraphics[width=0.7\textwidth]{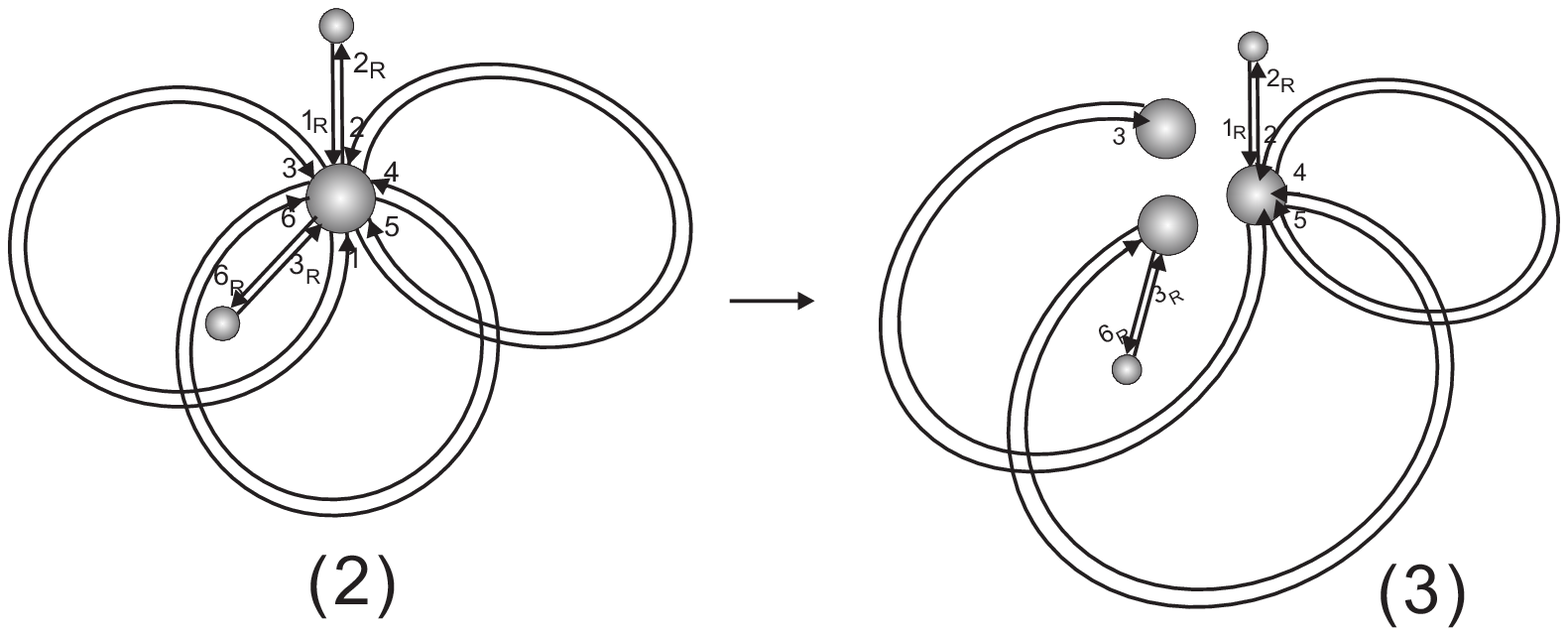}
  \end{center}
  \caption{Slicing bicellular maps, see Section~\ref{S:slgl} for details. 
           Slicing decreases genus by $1$.
  }\label{F:slice2}
  \end{figure}
  %%%%%%%%%
  %%%%
  
  %%%%%%%%%%%%%%%%%%%%%%%%%%%%%%%%%%%%%%%%%%%%%%%%%%
    %%%%%%%%%
    \begin{figure}[H]
    \begin{center}
    \includegraphics[width=0.7\textwidth, height=0.7\textwidth]{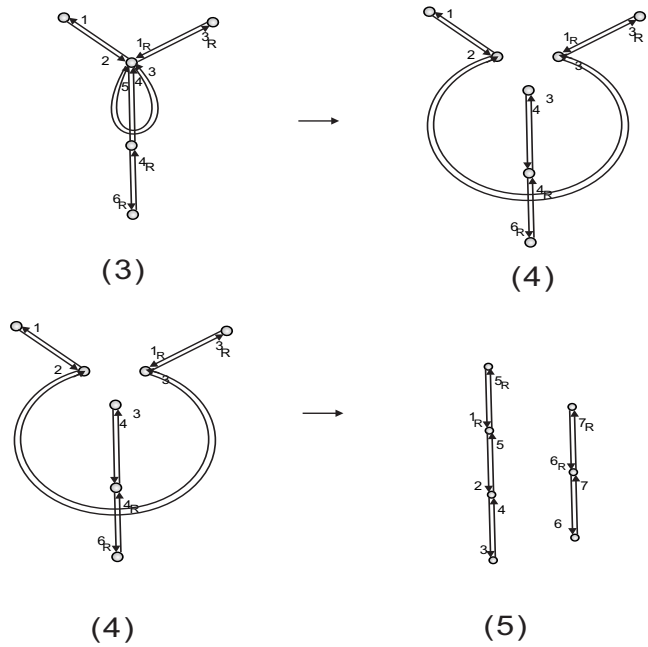}
    \end{center}
    \caption{Further slicing into two plane trees.
    }\label{F:slice3}
    \end{figure}
    %%%%%%%%%
    %%%%
    
The paper is organized as follows: 
 In Section~\ref{S:comp} we show that RNA-complexes are in 
one-to-one correspondence to such maps, namely those that are 
bicellular and planted, see Fig.~\ref{F:slice1}.
This correspondence allows us to perform all our constructions on maps
and eventually recover the diagram thereafter.
In Section~\ref{S:slgl} we study slicing and gluing of bicellular maps. 
We proceed by integrating the results of Section~\ref{S:slgl} into the 
main bijection and its combinatorial corollary in Section~\ref{S:genus}, 
see Fig.~\ref{F:slice2}, \ref{F:slice3}.
Finally we present the uniform generation algorithm in Section~\ref{S:uni}.
Here the idea is to go back, i.e.~we start from a pair of trees and successively
rebuild the bicellular map. Finally, in Section~\ref{S:dis}, we discuss our results 
and show how to use our decomposition grammar to sample RNA-RNA interaction 
structures non-uniformly, employing a simplified loop-base model. This
shows that the unambiguous grammar developed here has many applications and
simply lifts the stochastic-context-free grammar approaches to secondary 
structures to structures with cross-serial interaction arcs.
Various statistics about the loops and stacks in uniformly generated 
complexes of fixed topological genus are given and related to
biological RNA-RNA interaction structures \cite{Richter}.

%%%
%%%%%%%%%%%%%%%%%%%%%%%%%%%%%%%%%%%%%%%%%%%%%%%%%%%%%%%%%%%%%%%%%%%%%%%%%%%
%%%
\section{From RNA-complexes to bicellular maps and back}\label{S:comp}
%%%
%%%%%%%%%%%%%%%%%%%%%%%%%%%%%%%%%%%%%%%%%%%%%%%%%%%%%%%%%%%%%%%%%%%%%%%%%%%
%%%

\begin{definition}
A diagram is a labeled graph over the vertex set $[n]=\{1,2,\ldots,n\}$ 
represented by drawing the vertices $1,2,\ldots,n$ on a horizontal line 
in the natural order and the arcs
$(i,j)$, where $i<j$, in the upper half-plane.
The backbone of a diagram is the sequence of
consecutive integers $(1,\dots,n)$ together with the edges $\{\{i,i+1\}
\mid 1\le i\le n-1\}$.  A diagram over $b$ backbones is a diagram
together with a partition of $[n]$ into $b$ backbones, 
see Fig.~\ref{F:diagram}.
\end{definition}
 %%%%%%%%%%%%%%%%%%%%%%%%%%%%%%%%%%%%%%%%%%%%%%%%%%
 %%%%%%%%%
 \begin{figure}[ht]
 \begin{center}
 \includegraphics[width=0.7\textwidth]{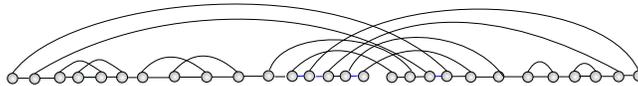}
 \end{center}
 \caption{A $2$-backbone diagram with $28$ vertices and $14$ arcs.
 }\label{F:diagram}
 \end{figure}
 %%%%%%%%%
 %%%%
We shall distinguish backbone edges
$\{i,i+1\}$ from arcs $(i,i+1)$, which we refer to as $1$-arcs.
Two arcs $(i,j)$, $(r,s)$, where $i<r$ are crossing if $i<r<j<s$ holds.
Parallel arcs of the form $\{(i,j), (i+1,j-1), \cdots, (i+\ell-1, j-\ell+1)\}$
is called a stack, and $\ell$ is called the length of this stack.
Furthermore, the particular arc, $(1,n)$, is called the rainbow.

Vertices and arcs of a diagram correspond to nucleotides and base pairs, respectively.
For a diagram over $b$ backbones, the leftmost vertex of each back-bone denotes the
$5'$ end of the RNA sequence, while the rightmost vertex denotes the $3'$ end.
The particular case $b=2$ is referred to as RNA interaction structures. Interaction
structures are oftentimes represented alternatively by drawing the two backbones
on top of each other.

We will add an additional ``rainbow-arc'' over each respective backbone and refer to 
these diagrams as \textit{ planted $2$-backbone diagrams}, see Fig.~\ref{F:planted}. 

 %%%%%%%%%%%%%%%%%%%%%%%%%%%%%%%%%%%%%%%%%%%%%%%%%%
 %%%%%%%%%
 \begin{figure}[ht]
 \begin{center}
 \includegraphics[width=0.7\textwidth]{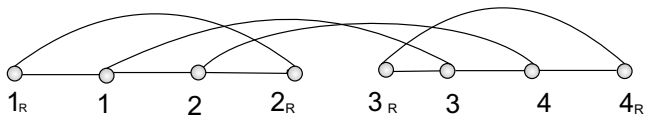}
 \end{center}
 \caption{A planted $2$-backbone diagram with its rainbow arcs
  $(1_R, 2_{R}),(3_R, 4_{R})$.
 }\label{F:planted}
 \end{figure}
 %%%%%%%%%
 %%%%

The specific drawing of a diagram $G$ in the plane determines a cyclic ordering on
the half edges of the underlying graph incident on each vertex, thus defining a
corresponding fat graph $\mathbb{G}$. The collection of cyclic orderings is called
fattening, one such ordering on the half-edges incident on each vertex. Each fat graph
$\mathbb{G}$ determines an oriented surface $F(\mathbb{G})$ which is connected if
$\mathbb{G}$ is and has some associated genus $g(\mathbb{G})\geq 0$ and number 
$r(\mathbb{G})\geq 1$ of boundary
components. Clearly, $F(\mathbb{G})$ contains $G$ as a deformation retract.
Without affecting topological type of the constructed surface, one may collapse each
backbone to a single vertex with the induced fattening called the polygonal model of the
RNA, see Fig~\ref{F:inflation}.
%%%%
%%%%%%%%%%%%%%%%%%%%%%%%%%%%%%%%%%%%%%%%%%%%%%%%%%%%%%%%%%%%%%%%%%%%%%%%%%%%%%%%%%
%%%%%%%%%
 \begin{figure}[ht]
 \begin{center}
 \includegraphics[width=0.9\textwidth]{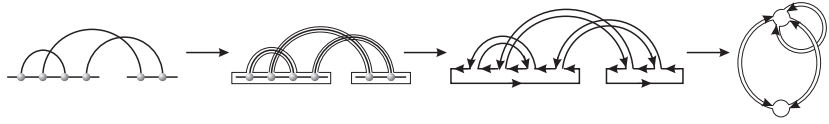}
 \end{center}
 \caption{\small Inflation of a $2$-backbone diagram and collapse of its $2$ backbones
 to two vertices
 }\label{F:inflation}
 \end{figure}
 %%%%
 %%%%%%%%%%%%%%%%%%%%%%%%%%%%%%%%%%%%%%%%%%%%%%%%%%%%%%%%%%%%%%%%%%%%%%%%%%%%%%%%
 %%%%

We next prepare ourselves to study bicellular maps. To this end we discuss the idea behind
general maps:

%%%
%%%%%%%%%%%%%% definition of a map %%%%%%%%%%%%%%%%%%%%%%%%%%%%%%%%%%%%%%%%%%
%%%%
\begin{definition}
Let $n$ be a positive integer. A map of size $n$ is a triple
$\mathfrak{m}=(\gamma, \alpha, \sigma)$ of permutations over
$[1,2n]$ such that: \\
$\bullet$ $\sigma\alpha=\gamma$,\\
$\bullet$ $\alpha$ is  a fixed-point free involution (i.e.~all its cycles
          have length 2).
\end{definition}
%%%%%%%%%%%%%%
%%%%%%%%%%%%%%%%%%%%%%%%%%%%%%%%%%%%%%%%%%%%%%%%%%%%%%%%%%%%%%%%%%%%%%%%%%%%%%
%%%%%
%%%%%
%%%%%%%%%%%%%%%%%%%%%%%%%%%%%%%%%%%%%%%%%%%%%%%%%%%%%%%%%%%%%%%%%%%%%%%%%%%%%%
%%%%%
As usual we write a permutation $\sigma$ as a product of its cycles and
denote the number of its cycles by $|\sigma|$. Suppose we are given a map
$\mathfrak{m}=(\gamma, \alpha, \sigma)$, then the cycles of $\gamma$,
$\alpha$ and $\sigma$ are referred to as \textit{faces}, \textit{edges}, and
\textit{vertices}, respectively.
%%%%%%%%
%%%%%%%%%%%%%%%%%%%% fat graph%%%%%%%%%%%%%%%%%%%%%%%%%%%%%%%%%%%%%%%%%%%%
%%%%

We can use fat graphs $\mathbb{G}$, which sometimes also called ``ribbon graph'',
to give a graphical interpretation of maps. 
A fat graph is a multi-graph (with loops and multiple edges allowed), with
a prescribed cyclic order (counter-clockwise) of the edges around each vertex.

Given a map $\mathfrak{m}=(\gamma,\alpha,\sigma)$, 
its associated fat graph $\mathbb{G}$ is the graph whose edges are 
given by the cycles of $\alpha$, vertices by the cycles of $\sigma$, 
and the natural incidence relation $v\sim e $ if $v$ and $e$ share an element.
 Moreover, we draw each edge of $\mathbb{G}$ as a ribbon, where each side
of the ribbon is called a half-edge; we decide which half-edge corresponds 
to which side of the ribbon by the convention that, 
if a half-edge $h$ belongs to a cycle  $4$ of $\alpha$ and 
$v$ of $\sigma$, then $h$ is the right-hand side of the ribbon 
corresponding to $e$, when considered entering $v$. Furthermore, we draw 
the graph $G$ in such a way that around each vertex $v$, the counter-clockwise
 ordering of the half-edges belonging to the cycle $v$ 
is given by that cycle.  Note that the  cycles of  
of the permutation $\gamma = \sigma\alpha$ are interpreted
as the sequence of half-edges visited when making a tour of 
the graph, keeping the graph on its left, see Fig.~\ref{F:fatgraph}.
%%%
%%%%%%%%%%%%%%%%%%%%%%%%%%%%%%%%%%%%%%%%%%%%%%%%%%%%
%%%
    \begin{figure}[ht]
    \begin{center}
    \includegraphics[width=0.9\textwidth]{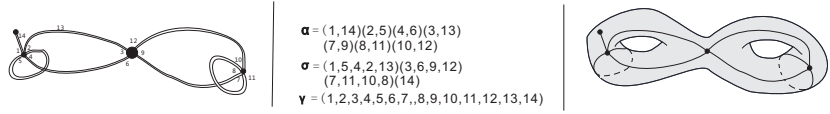}
    \end{center}
    \caption{\small A map $\mathfrak{m}=(\gamma,\alpha,\sigma)$ 
    left: fat graph representation, middle: permutation representation right: 
     topological embedding.  
    }\label{F:fatgraph}
    \end{figure}
%%%
%%%%%%%%%%%%%%%%%%%%%%%%%%%%%%%%%%%%%%%%%%%%%%%%%%
%%%

If the associated fat graph is connected, we call the map connected.
A connected map $\mathfrak{m}$ can be embedded in a compact orientable
surface, such that its complement is a disjoint union of simply 
connected domains (called the faces), and 
considered up to oriented homeomorphism. We can define
the genus $g$ of the map $\mathfrak{m}$ by the genus of the surface. We can
rewrite Euler's characteristic formula in terms of $\sigma,\gamma$ and 
$\alpha$ as $|\sigma|+|\gamma|=|\alpha| + 2-2g$.

%%%
%%%%%%%%%%%%%%%%%%%%%%%%%%%%%%%%%%%%%%%%%%%%%%%%%%%%%%%%%%%%%%%%%%%%%%%%%%%%%%
%%%

Now we are in position to discuss planted, bicellular maps.

%%%
%%%%%%%%%%%%%%%%%%%%%%%%%%%%%%%%%%%%%%%%%%%%%%%%%%%%%%%%%%%%%%%%%%%%%%%%%%%
%%%
\begin{definition}
A map $\mathfrak{b}_g=(\gamma,\alpha,\sigma)$ having $n$ edges, genus $g$ 
and boundary component
$\gamma=\omega_1 \omega_2= (1,2,3,\cdots,k)(k+1,k+2,\cdots,2n)$ is called 
bicellular if there exist some half-edge $x \in \omega_1$, such that $\alpha(x)
\in \omega_2$.
\end{definition}

%%%%%%%%%%%%%%%%%%%%%%%%%%%%%%%%%%%%%%%%%%%%%%%%%%%%%%%%%%%%%%%%%%%%%%%%%%%
%%%

%%%
%%%%%%%%%%%%%%%%%%%%%%%%%%%%%%%%%%%%%%%%%%%%%%%%%%%%%%%%%%%%%%%%%%%%%%%%%%%
%%%
\begin{definition}
A planted, bicellular map $\mathfrak{b}_g$ having $n$ edges and genus $g$
is a bicellular map $\mathfrak{b}_g=(\gamma, \alpha, \sigma)$, such that
$$
\gamma=\omega_1\omega_2=(1_R, 1, \ldots,m, m_R)((m+1)_R, (m+1)
\ldots,  2n,  (2n)_R),
$$
where $|\gamma|=2n+4$ and $\alpha$ is a fixed-point free involution
containing the cycles $(1_R,m_R)$ and $((m+1)_R, 2n_R)$. We refer to the
latter as plants.
\end{definition}
%%%
%%%%%%%%%%%%%%%%%%%%%%%%%%%%%%%%%%%%%%%%%%%%%%%%%%%%%%%%%%%%%%%%%%%%%%%%%%%%
%%%

While bicellular maps are simply particular fat graphs, they naturally
arise as the Poincar\'{e} dual of $2$-backbone diagrams. That is, we have

%%%
%%%%%%%%%%%%%%%%%%%%%%%%%%%%%%%%%%%%%%%%%%%%%%%%%%%%%%%%%%%%%%%%%%%%%%%%%%%%
%%%
\begin{lemma} \label{L:diagram}
There is a bijection between planted $2$-backbone diagrams 
and planted bicellular maps.
\end{lemma}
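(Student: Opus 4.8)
The plan is to turn the inflate-and-collapse description of Section~\ref{S:comp} into an explicit combinatorial encoding and to read the inverse directly off the permutation triple. Because a planted $2$-backbone diagram is drawn canonically --- vertices on two horizontal lines in natural order, arcs in the upper half-plane, and one rainbow over each backbone --- its fattening is forced, so the forward assignment involves no choices. Concretely, for a diagram $D$ I would enumerate the arc-endpoints $1,2,\dots,m$ along the first backbone (from $5'$ to $3'$) and $m{+}1,\dots,2n$ along the second, and record the four plant half-edges $1_R,m_R,(m{+}1)_R,(2n)_R$ contributed by the two rainbows. Setting
\[
\gamma=\omega_1\omega_2=(1_R,1,\dots,m,m_R)\bigl((m{+}1)_R,m{+}1,\dots,2n,(2n)_R\bigr),
\]
taking $\alpha$ to be the fixed-point-free involution whose $2$-cycles are the arcs of $D$ together with the plants $(1_R,m_R)$ and $((m{+}1)_R,(2n)_R)$, and putting $\sigma:=\gamma\alpha^{-1}$ as forced by $\sigma\alpha=\gamma$, defines a candidate map $\Phi(D)$. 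In this encoding each backbone-with-rainbow is read as one boundary polygon and each arc as an edge, so faces correspond to backbones and edges to arcs; this is exactly the Poincar\'e dual of the collapsed polygonal model.

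Next I would check that $\Phi(D)$ is a planted bicellular map. Most of the axioms are immediate: $\alpha$ is a fixed-point-free involution since distinct arcs meet disjoint, uniquely labelled endpoints; the two plants appear with the prescribed labels; and $\gamma$ already has the required two-cycle normal form with $|\gamma|=2n+4$. The one genuine condition is bicellularity, the existence of $x\in\omega_1$ with $\alpha(x)\in\omega_2$. I would show this holds exactly when $D$ carries at least one external (inter-backbone) arc, which is precisely the connectivity of the inflated surface $F(\mathbb{G})$ and hence the requirement that $\Phi(D)$ be a connected map with well-defined genus. A brief Euler-characteristic count, using $|\sigma|+|\gamma|=|\alpha|+2-2g$, then matches $g(\Phi(D))$ with the genus of the surface obtained by inflating $D$, so that the topological filtration is preserved.

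For the inverse $\Psi$ I would start from a planted bicellular map, use the two plants to mark the ends of each face, linearize $\omega_1$ and $\omega_2$ by cutting each cyclic word at its plant into the sequences $1,\dots,m$ and $m{+}1,\dots,2n$, draw one arc per non-plant $2$-cycle of $\alpha$, and lay the labels down in this order on two backbones. That $\Psi\circ\Phi$ and $\Phi\circ\Psi$ are identities is then formal, since both maps only re-encode the same pair consisting of the backbone orders and the arc-pairing.

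The step I expect to be the real obstacle is the well-definedness of $\Psi$, i.e.\ that the plant \emph{canonically} opens each cyclic face into a unique linear backbone order. Concretely one must prove that the boundary walk of the inflated, collapsed diagram visits the half-edges in exactly the asserted order $1_R,1,\dots,m,m_R$ --- not some rotation or reflection of it --- so that cutting at the plant is unambiguous and respects the counter-clockwise orientation fixed in Section~\ref{S:comp}. Verifying this orientation and normal-form compatibility is what promotes the evident ``same data'' correspondence into a genuine, orientation-preserving bijection; everything else reduces to bookkeeping.
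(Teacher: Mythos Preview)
Your proposal is correct and follows essentially the same route as the paper: inflate the diagram, collapse backbones, and pass to the Poincar\'e dual so that backbones become faces and arcs become edges; the paper states this in two sentences via the map $\pi\colon(\gamma,\alpha,\sigma)\mapsto(\sigma,\alpha,\alpha\circ\sigma)$, whereas you write out the resulting permutation triple and its inverse explicitly. Your flagged ``obstacle'' is not a real difficulty: the plants $(1_R,m_R)$ and $((m{+}1)_R,(2n)_R)$ are introduced precisely to single out a canonical starting half-edge in each face cycle, so the linearization of $\omega_1,\omega_2$ is forced and the inverse is immediate.
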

%%%
%%%%%%%%%%%%%%%%%%%%%%%%%%%%%%%%%%%%%%%%%%%%%%%%%%%%%%%%%%%%%%%%%%%%%%%%%%%%
%%%
\begin{proof}
Given a planted $2$-backbone diagram, we inflate the arcs, collapse each backbone 
into a single vertex, see Fig.~\ref{F:inflation}. This produces a fat graph with 
two vertices $(\gamma, \alpha, \sigma)$. Next we consider the mapping
(note $\gamma=\alpha\circ \sigma$):
$$
\pi\colon (\gamma, \alpha, \sigma ) \rightarrow
(\sigma, \alpha,\alpha\circ \sigma).
$$
$\pi$ is evidently a bijection between fat graphs having two vertices and
bicellular maps, see Fig.~\ref{F:dual}. The mapping is an instantiation
of the Poincar\'{e} dual and interchanges boundary components with vertices,
preserving by construction topological genus.
\end{proof}

 %%%%%%%%%
 \begin{figure}[ht]
 \begin{center}
 \includegraphics[width=0.9\textwidth]{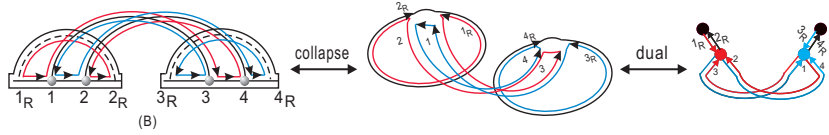}
 \end{center}
 \caption{\small The Poincar\'{e} dual: from RNA complexes to bicellular maps and
          back.}\label{F:dual}
 \end{figure}
 %%%%%%%%%
 %%%%
%%%
%%%%%%%%%%%%%%%%%%%%%%%%%%%%%%%%%%%%%%%%%%%%%%%%%%%%%%%%%%%%%%%%%%%%%%%%%%%
%%%
%%%
%%%%%%%%%%%%%%%%%%%%%%%%%%%%%%%%%%%%%%%%%%%%%%%%%%%%%%%%%%%%%%%%
%%%
\section{Slicing and gluing in bicellular maps}\label{S:slgl}
%%%
%%%%%%%%%%%%%%%%%%%%%%%%%%%%%%%%%%%%%%%%%%%%%%%%%%%%%%%%%%%%%%%%
%%%

Given a bicellular map $\mathfrak{b}_g$, the permutations $\sigma$ and $\gamma$
induce the following two linear orders $<_{\gamma}$ and $<_{\sigma}$ of half-edges:
To define $<_\gamma$, we set $r_1<_{\gamma} r_2$ for $r_1 \in \omega_1$ and $r_2 \in
\omega_2$ and
$$
r <_{\gamma} \omega_i(r) <_{\gamma} \dots <_{\gamma} \omega_i^{k-1}(r), \qquad
\text{for} \ \  i\in \{1,2\}.
$$
Note that the minimal element here is the half-edge coming out from the first plant.

 \begin{figure}[ht]
 \begin{center}
 \includegraphics[width=0.4\textwidth]{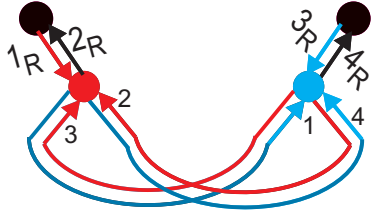}
 \end{center}
 \caption{\small The two orders: 
 $1_R<_{\gamma} 1<_{\gamma} 2<_{\gamma} 2_R <_{\gamma}3_R <_{\gamma}3 <_{\gamma} 4 <_{\gamma} 4_R$
 and $1_r<_{\sigma} 3<_{\sigma}2$, $1<_{\sigma} 4 <_{\sigma} 3_R$.
 }\label{F:order}
 \end{figure}

In order to define $<_\sigma$ we set for any vertex $v=\sigma_i$:
$$
r <_{\sigma} \sigma_i(r) <_{\sigma} \dots <_{\sigma} \sigma_i^{k}(r).
$$
Let $a_1$ and $a_2$ be two distinct half-edges in $\mathfrak{b}_g$. Then
$a_1<_{\gamma} a_2$ expresses the fact that $a_1$ appears before $a_2$ in
the boundary component $\omega_i$ or $a_1 \in \omega_1 $ and $a_2 \in \omega_2 $.
Suppose two half-edges $a_1$ and $a_2$ belong to the same vertex $v$. Note
that $v$ is a cycle which we assume to originate with the first
half-edge along which one enters $v$ travelling $\gamma$. Then
$a_1<_\sigma a_2$ expresses the fact that $a_1$ appears (counter-clockwise)
before $a_2$.

%%%
%%%%%%%The definition of trisection%%%%%%%%%%%%%%%%%%%%%%%%%%%%%%%%%%%%%%%%%
%%%
\begin{definition}
A half-edge $h$ is an {\it up-step} if $h<_{\gamma} \sigma(h)$, and a
{\it down-step} if $\sigma(h) \le_{\gamma} h$. $h$ is called a
{\it trisection} if $h$ is a down-step and $\sigma(h)$ is not the
minimum half-edge of its respective vertex.
\end{definition}
%%%
%%%%%%%%%%%%%%%%%%%%%%%%%%%%%%%%%%%%%%%%%%%%%%%%%%%%%%%%%%%%%%%%
%%%

This following lemma is the analogon to the trisection lemma of \citep{Chapuy:11}
for planted bicellular maps:

%%%
%%%%%%%%%%%%%%%%%%%%%%%%%%%%%%%%%%%%%%%%%%%%%%%%%%%%%%%%%%%%%%%%%%%%%%%%%%%%
%%%
\begin{lemma}\label{L:trisection}
Any planted bicellular map, $\mathfrak{b}_g=(\gamma, \alpha, \sigma)$,
has $2(g+1)$ trisections.
\end{lemma}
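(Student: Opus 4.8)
The plan is to follow the strategy of Chapuy's trisection lemma \citep{Chapuy:11}, adapted to account for the two boundary components and the two plants. First I would record the global count coming from Euler's formula. Since $\mathfrak{b}_g$ has two boundary components, so that $|\gamma|=2$ as a number of cycles, and since $\alpha$ acts on the $2n+4$ half-edges and hence has $n+2$ cycles (two of which are the plants, leaving $n$ non-plant edges), the relation $|\sigma|+|\gamma|=|\alpha|+2-2g$ gives $|\sigma|=n+2-2g$. This value of $|\sigma|$ is what will eventually produce the $+2$ in $2(g+1)$.

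Next I would reduce the count of trisections to a count of down-steps. Fixing a vertex $v$, i.e.\ a cycle of $\sigma$, let $\mu_v$ be its $<_\gamma$-minimal half-edge. The unique half-edge mapped by $\sigma$ onto $\mu_v$ is $\sigma^{-1}(\mu_v)$, and since $\mu_v\le_\gamma \sigma^{-1}(\mu_v)$ this half-edge is always a down-step; moreover it is the only down-step at $v$ that fails to be a trisection, because a down-step $h\in v$ is a non-trisection precisely when $\sigma(h)=\mu_v$. Consequently the number of trisections at $v$ equals $D_v-1$, where $D_v\ge 1$ is the number of down-steps lying in $v$, and summing over all vertices shows that the total number of trisections equals $D-|\sigma|$, where $D$ is the total number of down-steps. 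One also checks that the degenerate case of a vertex carrying a single half-edge $h$, where $\sigma(h)=h=\mu_v$, fits this bookkeeping.

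It then remains to count down-steps, equivalently up-steps, globally, and this is the step that uses the planted bicellular structure most heavily; I expect it to be the main obstacle. Writing the half-edges in $<_\gamma$-order as $f_1<_\gamma\cdots<_\gamma f_{2n+4}$ and using $\sigma=\gamma\alpha$, I would decide whether $f_i$ is an up-step by locating its partner $f_k=\alpha(f_i)$ and comparing $f_i$ with $\sigma(f_i)=\gamma(f_k)$. For a half-edge that is not $<_\gamma$-maximal in its boundary component one has $\gamma(f_k)=f_{k+1}$, and a short case analysis then shows that every edge $\{f_a,f_b\}$ with $a<b$ and $b$ not a boundary-maximum contributes exactly one up-step (namely $f_a$) and one down-step. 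The crucial point is that the two plants are precisely the edges $\{f_1,f_p\}$ and $\{f_{p+1},f_{2n+4}\}$ joining the two boundary-minima to the two boundary-maxima (here $f_p$ is the $<_\gamma$-maximal half-edge of $\omega_1$), so $\gamma$ wraps around at both of their larger endpoints; a direct check then shows each plant contributes no up-step at all. Hence the number of up-steps equals the number of non-plant edges, namely $n$.

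Finally I would assemble the pieces: there are $2n+4$ half-edges, so $D=(2n+4)-n=n+4$, and therefore the number of trisections is $D-|\sigma|=(n+4)-(n+2-2g)=2(g+1)$. The only genuinely delicate part is the up-step count, where one must handle the wrap-around of $\gamma$ at the boundary-maxima and verify that the plants sit exactly at the extremal positions; the rest is the same local-to-global bookkeeping as in the unicellular case.
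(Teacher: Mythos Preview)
Your proof is correct and follows essentially the same strategy as the paper: both adapt Chapuy's trisection lemma by counting up-steps versus down-steps, identifying the two plants as the source of four surplus down-steps, and then subtracting $|\sigma|=n+2-2g$ to obtain $2(g+1)$. The only cosmetic difference is that the paper pairs a half-edge $i$ with $j=\sigma^{-1}\alpha\sigma(i)$ and argues case-by-case that $i$ is an up-step iff $j$ is a down-step except at the plants, whereas you pair $f_i$ directly with its $\alpha$-partner via $\sigma=\gamma\alpha$ and read off the up/down split edge-by-edge; your formulation is arguably more transparent but the substance is identical.
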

%%%
%%%%%%%%%%%%%%%%%%%%%%%%%%%%%%%%%%%%%%%%%%%%%%%%%%%%%%%%%%%%%%%%%%%%%%%%%%%%
%%%
\begin{proof}
Let $n_+$ and $n_-$ denote the number of up-steps and down-steps in
$\mathfrak{b}_g$. Then we have $n_+ + n_-=2n+4$, where $n$ is the number
of edges of $\mathfrak{b}_g$. Let $i$ be a half-edge of $\mathfrak{b}_g$,
and $j=\sigma^{-1}\alpha\sigma(i)$. Observe that we have $\sigma(j)=
\gamma(i)$, and $\gamma(j)=\sigma(i)$. It is clear
that if the tour of the map visits $i$ before $\sigma(i)$, then it
necessarily visits $\sigma(j)$ before $j$, see Fig.~\ref{F:1}.
We distinguish four cases:

%%%
%%%%%%%%%%%%%%%%%%%%%%%%%%%%%%%%%%%%%%%%%%%%%%%%%%%%%%%%%
%%%
\begin{figure}[ht]
\begin{center}
\includegraphics[width=4cm]{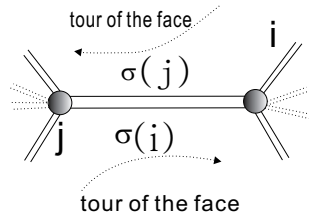}
\end{center}
\caption{The main argument in the proof of the trisection
lemma: the tour of the face visits $i$ before $\sigma(i)$ if
and only if it visits $\sigma(j)$ before $j$, unless
$\sigma(i)$ or $\sigma(j)$ is
the plants of the bicellular map.}
\label{F:1}
\end{figure}
%%%
%%%%%%%%%%%%%%%%%%%%%%%%%%%%%%%%%%%%%%%%%%%%%%%%%%%%%%%
%%%
First suppose $i<_{\gamma} \sigma(i)=\gamma(j)$, i.e.~$i$ is an up-step.
Then $i<_{\gamma} \gamma(j)$ implies $\gamma(i)\leq_{\gamma}\gamma(j)$.
Since $\gamma(i)=(\sigma\alpha)(i)=\sigma(j)$, this means $\sigma(j)
\leq_{\gamma} \gamma(j)$. By definition of $<_{\gamma}$, this implies
$\sigma(j)\leq_{\gamma} j$ since $j$ is maximal with this property and
$\sigma(j) \neq \gamma(j)$ ($\alpha$ has no fixed point).
Accordingly, if $i$ is an up-step, then $j$ is a down-step.

Second, assume that $\sigma(i)=\gamma(j)\le_\gamma i$ and that
$\gamma(j)$ is not one of the two plants.
In this case, $j<_{\gamma} \gamma(j)$ implies
$$
j<_\gamma\gamma(j)=\sigma(i) \leq_{\gamma} i <_{\gamma} \gamma(i) =\sigma(j),
$$
that is, $j<_{\gamma} \sigma(j)$, and $j$ is consequently an up-step.

Third suppose that $\sigma(i)=\gamma(j)\le_\gamma i$ and $\gamma(j)=1_R$.
Then $j = m_R= \alpha (1_R)= \alpha (\gamma (j))=\sigma(j)$ and
$j$ is a down-step.

Fourth we suppose $i$ is a down-step and $\gamma(j)= (m+1)_R$.
Then $j=2n_R$ is the biggest label of the half-edges. So $j$ is
always a down-step, i.e.~$j\geq_{\gamma} \sigma(j)$.

Therefore we have proved that each edge, except of $(1_R, m_R)$ and
$((m+1)_R,2n_R)$ is associated to one up-step and one down-step.
As a result there are exactly four more down-steps than up-steps,
i.e. $n_- =n_+ + 4$, whence $n_-=n+4$.

Since each vertex carries exactly one down-step which is not a
trisection (its minimal half-edge), the total number of trisections
equals $(n_- - v)$, where $v$ is the number of vertices of
$\mathfrak{b}_g$. Euler's characteristic formula, $v = n + 2 - 2g$,
implies that the number of trisections is $n+4-(n+2-2g)=2g+2$, whence the
lemma. 
\end{proof}

We next study the effect of slicing and gluing in bicellular maps.
%%%
%%%%%%%%%% Slicing and gluing in bicellular maps %%%%%%%%%%%%%%%%%%%%%%%%%%%
%%%
\begin{lemma}{\bf (slicing and gluing)}\label{L:2faces}
Suppose
$$
v:=(a_1, h_2^1, \ldots,  h_2^{m_2}, a_2, h_3^1, \ldots, h_3^{m_3}, a_3, h_1^1,
 \ldots, h_1^{m_1} )
$$
is a $\mathfrak{b}_g$-vertex and $a_1,a_2,a_3$ are intertwined,
i.e.~$a_1<_{\gamma} a_3<_{\gamma} a_2$ and $a_1 <_{\sigma} a_2 <_{\sigma} a_3$.
Then slicing $v$ via $\{a_1,a_2,a_3\}$ produces either a bicellular map
$\mathfrak{b}_{g-1}$, or a pair of unicellular maps
$(\mathfrak{u}_{g_1},\mathfrak{u}_{g-g_1})$.
Furthermore, slicing can be reversed by gluing via $\{a_1,a_2,a_3\}$.
\end{lemma}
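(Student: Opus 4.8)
The plan is to make the slicing operation explicit as multiplication of $\sigma$ by a fixed $3$-cycle and then read off everything from the cycle structures of $\sigma$ and $\gamma=\sigma\alpha$. Writing $v=(a_1,A_{12},a_2,A_{23},a_3,A_{31})$ with $A_{12}=(h_2^1,\dots,h_2^{m_2})$, $A_{23}=(h_3^1,\dots,h_3^{m_3})$, $A_{31}=(h_1^1,\dots,h_1^{m_1})$, I define slicing by $\sigma\mapsto\sigma'=\sigma\,(a_1,a_3,a_2)$ while keeping $\alpha$ (hence the edge set and the plants) fixed. A direct check shows $\sigma'$ replaces the single cycle $v$ by the three cycles $(a_1,A_{31})$, $(a_2,A_{12})$, $(a_3,A_{23})$, so that $|\sigma'|=|\sigma|+2$; gluing is then defined as the inverse multiplication $\sigma'\mapsto\sigma'\,(a_1,a_2,a_3)$, which merges these three vertices back into $v$, giving reversibility immediately.

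Since only the images of $a_1,a_2,a_3$ under $\sigma$ change, the face permutation changes only through its values at $b_i:=\alpha(a_i)$, and one computes $\gamma'=\sigma'\alpha=\gamma\,(b_1,b_3,b_2)$. The heart of the argument is to show that $\gamma'$ has exactly two cycles. Here I would use the intertwined hypothesis: the condition $a_1<_\gamma a_3<_\gamma a_2$ leaves only four admissible distributions of $a_1,a_2,a_3$ over the two boundary components $\omega_1,\omega_2$ (all three on a single component, or a split with $a_1$, resp.\ $a_1,a_3$, on $\omega_1$). In each case I trace the two boundary walks through the three rewiring points $b_1,b_2,b_3$ and check directly that multiplication by the $3$-cycle $(b_1,b_3,b_2)$ reconnects them into exactly two cycles; the complementary vertex order $a_1<_\sigma a_2<_\sigma a_3$ is precisely what forces the reconnection to be of the genus-reducing / face-merging type rather than a face-splitting one, ruling out the $|\gamma'|\in\{1,3,4\}$ alternatives. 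This case analysis is where I expect the real work to lie.

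Once $|\gamma'|=2$ and $|\sigma'|=|\sigma|+2$ are established, the dichotomy is essentially forced. Every connected component of a map carries at least one face, so the sliced map has at most $|\gamma'|=2$ components. If it stays connected it has two faces and is again bicellular; Euler's formula $|\sigma'|+|\gamma'|=|\alpha|+2-2g'$ together with $|\sigma'|=|\sigma|+2$ and $|\sigma|=|\alpha|-2g$ forces $g'=g-1$, and since $\alpha$ is untouched the plants $(1_R,m_R)$ and $((m+1)_R,2n_R)$ and the planted normal form of $\gamma'$ survive, so we obtain $\mathfrak{b}_{g-1}$. If instead it disconnects, it has exactly two components, each necessarily unicellular (one face apiece); applying Euler's formula to each and using additivity of the Euler characteristic gives genera $g_1$ and $g-g_1$ summing to $g$, i.e.\ the pair $(\mathfrak{u}_{g_1},\mathfrak{u}_{g-g_1})$. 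Finally, because slicing and gluing are multiplication by mutually inverse $3$-cycles supported on $\{a_1,a_2,a_3\}$, and gluing recovers the intertwined triple, the two operations are mutually inverse, completing the lemma.
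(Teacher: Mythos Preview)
Your plan is correct and is essentially the paper's own argument, recast in cleaner algebraic language: the paper also splits into the cases ``all three $a_i$ in one face'' versus ``split across $\omega_1,\omega_2$'', writes out the new boundary cycles explicitly to see that $|\hat\gamma|=2$, distinguishes the bicellular versus pair-of-unicellular outcomes by whether the connecting edges $E_{con}$ survive, and finishes with Euler's formula exactly as you do. Your formulation of slicing/gluing as right multiplication of $\sigma$ by mutually inverse $3$-cycles is a nice way to get $|\sigma'|=|\sigma|+2$ and the reversibility for free; the paper does these steps by hand.

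One caveat to watch when you carry out the case analysis. With the convention $\gamma=\sigma\alpha$ that you (and the paper's Definition) use, you correctly get $\gamma'=\gamma\,(b_1,b_3,b_2)$ with $b_i=\alpha(a_i)$, so the rewiring of the face permutation happens at the $b_i$'s, not at the $a_i$'s. But the intertwining hypothesis and your four-case split are stated for the $\gamma$-positions of the $a_i$'s. In the paper's actual proof of this lemma the computation is done as if $\gamma=\alpha\sigma$ (they write $\hat\gamma(a_1)=\alpha\hat\sigma(a_1)$), which gives $\gamma'=\gamma\,(a_1,a_3,a_2)$ and makes the case analysis on the $a_i$'s immediate. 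So either adopt that convention for this argument, or translate the $<_\gamma$ hypothesis on $a_1,a_3,a_2$ into information about the cyclic positions of $b_1,b_2,b_3$ before tracing the boundary walks; otherwise the ``four admissible distributions'' you list will not match the permutation you are multiplying by.
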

%%%
%%%%%%%%%% Slicing and gluing in bicellular maps %%%%%%%%%%%%%%%%%%%%%%%%%%%%
%%%
\begin{proof}
Suppose
$$
\gamma=\omega_1\omega_2=((1_{R_b},1_b,\dots, m_b, m_{R_b})
((m+1)_{R_b},(m+1)_b, \dots, (2n)_b, (2n)_{R_b})).
$$
We distinguish the following two scenarios:\\
%%%
%%%%%%%%%%%%%%%%%%%%%%%%%%%%%%%%%%%%%%%%%%%%%%%%%%%%%%%%%%%%%%%%%%%%%%%%%%%%%%
%%%
{\it Case $1$.} $a_1,a_2,a_3$ are either contained in $\omega_1$ or $\omega_2$.\\
%%%
%%%%%%%%%%%%%%%%%%%%%%%%%%%%%%%%%%%%%%%%%%%%%%%%%%%%%%%%%%%%%%%%%%%%%%%%%%%%%%
%%%
In this case it is clear that slicing preserves bicellularity. Indeed,
suppose $\{a_1,a_2,a_3\} \in \omega_1$, then we can write the two faces
as
$$
(a_1, k_2^1,k_2^2,\cdots,k_2^{l_2},a_3,k_1^1,k_1^2,\cdots,k_1^{l_1}, a_2,k_3^1,k_3^2,
\cdots,k_3^{l_3})(k_4^1,k_4^2,\cdots, k_4^{l_4} )
$$
and slicing generates the boundary components
\begin{equation*}
\begin{split}
\hat{\gamma}=\hat{\omega}_1\hat{\omega}_2= &
(a_1,k_1^1,k_1^2,\cdots,k_1^{l_1}, a_2, k_2^1,k_2^2,
\cdots,k_2^{l_2},a_3,k_3^1,k_3^2,\cdots,k_3^{l_3})\\
&\cdot(k_4^1,k_4^2,\cdots, k_4^{l_4} ).
\end{split}
\end{equation*}
Hence $E_{con}:=\{(x, \alpha(x)),| x\in \omega_1, \alpha(x)\in \omega_2\}$ is
unaffected by slicing, which implies that the sliced map remains bicellular
having two additional vertices. Since the number of edges remains constant
Euler characteristic implies that the genus decreases by $1$, see Fig.~\ref{F:case1}.
%%%
%%%%%%%%%%%%%%%%%%%%%%%%%%%%%%%%%%%%%%%%%%%%%%%%%%%%%%%%%
%%%
\begin{figure}[ht]
\begin{center}
\includegraphics[width=0.8\textwidth]{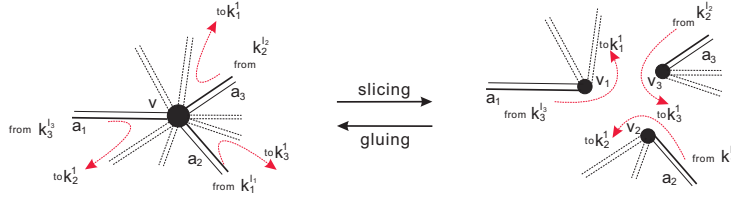}
\end{center}
\caption{Gluing and slicing: case $1$}
\label{F:case1}
\end{figure}
%%%
%%%%%%%%%%%%%%%%%%%%%%%%%%%%%%%%%%%%%%%%%%%%%%%%%%%%%%%
%%%
%%%
%%%%%%%%%%%%%%%%%%%%%%%%%%%%%%%%%%%%%%%%%%%%%%%%%%%%%%%%%%%%%%%%%%%%%%%%%%%%%%
%%%
{\it Case $2$.} 
$a_1,a_2,a_3$ are not contained in either one of the boundary components.
%%%
%%%%%%%%%%%%%%%%%%%%%%%%%%%%%%%%%%%%%%%%%%%%%%%%%%%%%%%%%%%%%%%%%%%%%%%%%%%%%%
%%%
Clearly, $a_1<_{\gamma} a_3<_{\gamma}a_2$ implies that we have the alternative
$\{a_1,a_3\} \in \omega_1, a_2 \in {\omega}_2$, or  $\{a_1\} \in \omega_1,
\{a_3,a_2\} \in {\omega}_2$.

In case of $\{a_1,a_3\} \in \omega_1, a_2 \in {\omega}_2$, we rewrite the two
faces as
$$
\gamma = \omega_1\omega_2=(a_1,k_2^1,k_2^2,\cdots,k_2^{l_2}, a_3,k_1^1,k_1^2,
\cdots,k_1^{l_1})(a_2,k_3^1,\cdots k_3^{l_3}).
$$
We next consider the half-edges whose image is not the same for $\gamma$ and
$\hat{\gamma}$. These are $\{a_1,a_2,a_3\}$ and by construction we
have
$$
\hat{\gamma}(a_1)=\alpha \hat{\sigma}(a_1)=\alpha(h_1)=\alpha(\sigma(a_3))=
\gamma(a_3)=k_1^1.
$$
Similarly, we have $\hat{\gamma}(a_2)=k_2^1$, and $\hat{\gamma}(a_3)=k_3^1$. Thus we
arrive at
$$
\hat{\gamma}=\hat{\omega}_1\hat{\omega}_2=
(a_1,k_1^1,k_1^2,\cdots,k_1^{l_1})(a_2,k_2^1,k_2^2,\cdots,
 k_2^{l_2},a_3,k_3^1,k_3^2,\cdots,k_3^{l_3}).
$$
Accordingly, slicing maps the set of half-edges
$E_{mov}=\{k_2^1,k_2^2,\cdots,k_2^{l_2}, a_3\}$
into the second boundary component, $\omega_2$. If
$E_{con}\nsubseteq E_{mov} $, then we still have a bicellular map with genus $(g-1)$.
However, if $E_{con}\subseteq E_{mov}$, then slicing produces a pair of unicellular
maps $(\mathfrak{u}_1,\mathfrak{u}_2)$.
Suppose that in this case $\mathfrak{u}_1$ has $v_1$ vertices, $m_1$ edges, and genus
$2-2g_1=v_1+1-m_1$. Then $\mathfrak{u}_2$ has $v_2=v-v_1+2$ vertices, $m_2=m-m_1$ edges and
genus $2-2g_2=v_2+1-m_2$, whence $g_2=g-g_1$, see Fig.~\ref{F:case2}.
%%%
%%%%%%%%%%%%%%%%%%%%%%%%%%%%%%%%%%%%%%%%%%%%%%%%%%%%%%%%%
%%%
\begin{figure}[ht]
\begin{center}
\includegraphics[width=0.8\textwidth]{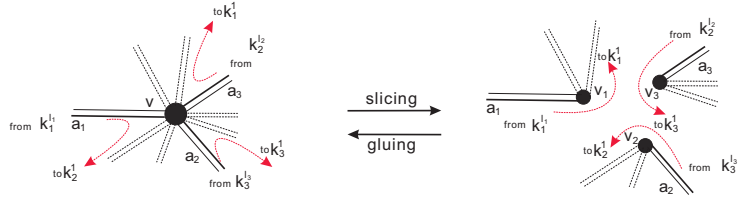}
\end{center}
\caption{Gluing and slicing: case $2$}
\label{F:case2}
\end{figure}
%%%
%%%%%%%%%%%%%%%%%%%%%%%%%%%%%%%%%%%%%%%%%%%%%%%%%%%%%%%
%%%

The case $\{a_1\} \in \omega_1, \{a_3,a_2\} \in {\omega}_2$ is treated analogously.
It is straightforward to verify that given $\{a_1,a_2,a_3\}$, slicing can be reversed by
gluing. 
\end{proof}

%%%
%%%%%%%%%%%%%%%%%%%%%%%%%%%%%%%%%%%%%%%%%%%%%%%%%%%%%%%%%%%%%%%%%%%%%%%%%%%%%%
%%%
\begin{definition}\label{D:slicing}
Given a bicellular map and a distinguished trisection $\tau$ at vertex $v$,
we set $a_1$ to be the minimum half-edge of $v$, $a_3$ the half-edge
following $\tau$ counter-clockwise and $a_2$ to be the smallest half-edge on
the left of $a_3$, which is greater than $a_3$.
Then slicing $v$ via $\{a_1,a_2,a_3\}$, we have two scenarios:
either $a_3$ is the minimum half-edge of its respective vertex, or not.
In the former case $\tau$ is called type $I$ and type $II$ in the latter.
\end{definition}
%%%
%%%%%%%%%%%%%%%%%%%%%%%%%%%%%%%%%%%%%%%%%%%%%%%%%%%%%%%%%%%%%%%%%%%%%%%%%%%%%%
%%%

Let $D_{b,g+1}^{I}(n)$ denote the set of bicellular maps of genus $(g+1)$ with $n$
edges and a distinguished trisection of type $I$. Let $B_g^3(n)$ denote the set
of bicellular maps of genus $g$ with $n$ edges and three distinguished
vertices.
Finally, let $(U_{g_1},U_{g+1-g_1})^3$ denote the set of pairs of unicellular maps
whose sum of genera equals $(g+1)$, having combined $n$ edges, such that
not all distinguished vertices are contained exclusively in one map. We call
such vertex configurations distributed.

%%%
%%%%%%%%%%%%%%%%%%%%%%%%%%%%%%%%%%%%%%%%%%%%%%%%%%%%%%%%%%%%%%%%%%%%%%%%%%%%%%
%%%
\begin{lemma}{\label{L:phi}}
There is a bijection
$$
\phi\colon D_{b,g+1}^{I}(n) \rightarrow B_g^3(n) \dot\bigcup  
(U_{g_1},U_{g+1-g_1})^3.
$$
\end{lemma}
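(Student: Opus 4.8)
The plan is to realize $\phi$ as the slicing operation of Definition~\ref{D:slicing} and to exhibit an explicit inverse given by gluing, so that the lemma reduces to the reversibility already recorded in Lemma~\ref{L:2faces} together with a careful bookkeeping of the distinguished data. First I would define $\phi$ on $(\mathfrak{b}_{g+1},\tau)\in D_{b,g+1}^{I}(n)$ as follows: from the type $I$ trisection $\tau$ at a vertex $v$ extract the triple $\{a_1,a_2,a_3\}$ prescribed by Definition~\ref{D:slicing}, and check that it is intertwined. This is readily verified from the definitions, since $a_1$ is simultaneously the $<_\gamma$- and $<_\sigma$-minimum of $v$ (the first half-edge entered travelling $\gamma$), $a_3=\sigma(\tau)$ follows the down-step $\tau$ counter-clockwise, and $a_2$ is chosen as the smallest half-edge greater than $a_3$ lying to its left, which together force $a_1<_{\gamma}a_3<_{\gamma}a_2$ and $a_1<_{\sigma}a_2<_{\sigma}a_3$. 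Applying Lemma~\ref{L:2faces} to this triple yields either a bicellular map of genus $g$ or a pair of unicellular maps $(\mathfrak{u}_{g_1},\mathfrak{u}_{(g+1)-g_1})$, and in both cases $v$ has been split into three vertices, which I declare distinguished. The genus bookkeeping is exactly the Euler-characteristic computation of Lemma~\ref{L:2faces}.

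Next I would construct the inverse $\psi$ by gluing. Given three distinguished vertices $v_1,v_2,v_3$, either inside a genus-$g$ bicellular map or distributed across a unicellular pair, I would order them by the $<_\gamma$-position of their minimal half-edges and reconstruct the unique triple $\{a_1,a_2,a_3\}$ that an intertwined slicing would have produced. Here the type $I$ hypothesis is essential: it is precisely the assertion that $a_3$ be recoverable as the minimal half-edge of its own vertex in the sliced map, which makes the assignment of roles $a_1,a_2,a_3$ to $v_1,v_2,v_3$ unambiguous. Gluing then merges the three vertices into a single vertex $v$, which by Lemma~\ref{L:2faces} raises the genus by one (connecting the two unicellular maps when they were disjoint), and I would verify that the half-edge $\tau$ with $\sigma(\tau)=a_3$ is a trisection of type $I$ of the glued map, namely that $a_3$ is not the minimum of $v$ (so $\tau$ is a genuine trisection) while being the minimum of its vertex in the sliced map (so $\tau$ is of type $I$).

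Finally I would prove $\psi\circ\phi=\mathrm{id}$ and $\phi\circ\psi=\mathrm{id}$. The reversibility clause of Lemma~\ref{L:2faces} already gives that slicing followed by gluing, and conversely, recovers the underlying map together with the triple $\{a_1,a_2,a_3\}$; what remains is to track the distinguished data, i.e.\ that the trisection returned by $\psi\circ\phi$ is again $\tau$ and of type $I$, and that $\phi\circ\psi$ returns the same three distinguished vertices in the same roles. Two points require attention in the disconnected alternative: I must show that when $\phi$ disconnects the map the three pieces of $v$ cannot all land in a single component, so that the image is genuinely \emph{distributed}; concretely I would argue that this disconnected event coincides with $E_{con}\subseteq E_{mov}$ from the proof of Lemma~\ref{L:2faces}, and that in that situation each component receives at least one piece of $v$.

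I expect the main obstacle to be exactly this matching of the distinguished data across the two alternatives (genus reduction versus loss of connectivity). The crux is to show that the $<_\gamma$-ordering of the minimal half-edges of $v_1,v_2,v_3$ determines the gluing uniquely, and that the type $I$ hypothesis rules out any ambiguity that would otherwise allow a type $II$ configuration to enter the image or to be confused with a type $I$ one under $\psi$. Once this uniqueness of roles is established, injectivity and surjectivity of $\phi$ follow formally from the slice/glue inversion, and the dichotomy $B_g^3(n)\,\dot\bigcup\,(U_{g_1},U_{g+1-g_1})^3$ is inherited verbatim from Lemma~\ref{L:2faces}.
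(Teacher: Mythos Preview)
Your proposal is correct and follows essentially the same route as the paper's proof: define $\phi$ as the slicing of Definition~\ref{D:slicing}, invoke Lemma~\ref{L:2faces} for the intertwined triple to land in the stated dichotomy with three distinguished vertices, and build the inverse by gluing along the $<_{\hat\gamma}$-minima of the three distinguished vertices, checking that $\sigma^{-1}(a_3)$ is a type~$I$ trisection after gluing. The only point where you are more scrupulous than the paper is in flagging the ``distributed'' condition in the disconnected case; the paper simply asserts it, and indeed it follows directly from the explicit form of $\hat\gamma$ in Case~2 of Lemma~\ref{L:2faces}, which places $a_1$ in one face and $a_2,a_3$ in the other.
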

%%%
%%%%%%%%%%%%%%%%%%%%%%%%%%%%%%%%%%%%%%%%%%%%%%%%%%%%%%%%%%%%%%%%%%%%%%%%%%%%%%
%%%
\begin{proof}
Given a bicellular map $\mathfrak{b}_{g+1}(n)$ with vertex $v$, having a
distinguished
type $I$ trisection, $\tau$. Let $\phi$ be the slicing map of $v$ via the
half-edge set $C_{\tau}=\{a_1,a_2,a_3\}$, as in Def.~\ref{D:slicing}.
By construction, we have $a_1<_{\gamma} a_3<_{\gamma} a_2$, i.e.~the half-edges
are intertwined.
From Lemma~\ref{L:2faces} we know that slicing produces either a bicellular
map, $\mathfrak{b}_{g-1}(n)$, or alternatively a pair of unicellular maps 
$(\mathfrak{u}_{g_1},\mathfrak{u}_{g+1-g_1})$.
Furthermore slicing produces a triple $(v_1,v_2,v_3)$, such that $a_i$ is the
minimum in the vertex $v_i$, respectively. In case of $(\mathfrak{u}_{g_1},
\mathfrak{u}_{g+1-g_1})$, such vertices are distributed. Accordingly, $\phi$ is
well-defined.

We proceed by constructing the inverse of $\phi$. To this end, let $\mathfrak{b}_{g}$
be a bicellular map of genus $g$ with three distinguished vertices
$\{v_1,v_2,v_3\}$, where
$$
a_1=\min_{\gamma} v_1<_\gamma a_2=\min_{\gamma} v_2 <_\gamma a_3=\min_{\gamma} v_3.
$$
Let $\chi(\mathfrak{b}_g)$ be the map obtained by the gluing of $\mathfrak{b}_{g}$
via $\{a_1,a_2,a_3\}$.
By construction, $a_1$ remains to be the minimum half-edge of the vertex.
$\sigma^{-1} (a_3)$  becomes a trisection which, by construction, is of type
$I$ and $a_2$ is by construction the smallest half-edge to the left of $a_3$
that is larger than $a_3$.
Similarly, suppose we are given a pair $(\mathfrak{u}_{g_1},\mathfrak{u}_{g+1-g_1})$ with
three distinguished, distributed vertices $v_1,v_2,v_3$. By construction, gluing
produces a bicellular map $\chi(\mathfrak{b}_g)$ with a distinguished trisection,
$\sigma^{-1}(a_3)$. As slicing and gluing are inverse operations we have
$\chi\circ \phi=\text{\rm id}$ and $\phi\circ \chi=\text{\rm id}$,
whence $\phi$ is a bijection. 
\end{proof}

Let $D_{b,g+1}^{II}(n)$ denote the set of bicellular maps of genus $(g+1)$ with
$n$ edges and a distinguished trisection of type $II$. Let $\nu_{\mathfrak{b},g}(n)$ be
the set of $4$-tuples $(\mathfrak{b}_g,v_1,v_2,\tau)$, where $\mathfrak{b}_g$ is
a bicellular map of genus $g$ with $n$ edges and where $v_1,v_2$ and $\tau$ are
two vertices and a trisection of $\mathfrak{b}_g$ such that:
$$
\min_{\gamma} v_1 <_{\gamma} \min_{\gamma} v_2 <_{\gamma} \min_{\gamma} v(\tau).
$$
Let $\kappa_{g+1}(n)$ be the set of  $5$-tuples
$(\mathfrak{u}_{g_1},\mathfrak{u}_{g+1-g_1}, v_1, v_2, \tau)$, where $\mathfrak{u}_{g_1}$
and $\mathfrak{u}_{g+1-g_1}$ are unicellular maps, with genus $g_1$ and $g+1-g_1$.
Furthermore, $v_1,v_2$ and $\tau$ are distributed, i.e.~not all three are contained in
$\mathfrak{u}_{g_1}$ or $\mathfrak{u}_{g+1-g_1}$.

Then we have the following analogon of Lemma~\ref{L:phi}.
%%%
%%%%%%%%%%%%%%%%%%%%%%%%%%%%%%%%%%%%%%%%%%%%%%%%%%%%%%%%%%%%%%%%%%%%%%%%%%%%%%%%%%%%%%%%%%%
%%%
\begin{lemma}{\label{L:psi}}
There exists a bijection $\psi\colon D_{b,g+1}^{II}(n) \rightarrow \nu_{b,g}(n) \dot\bigcup
\kappa_{g+1}(n)$.
\end{lemma}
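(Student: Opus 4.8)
The plan is to mirror the proof of Lemma~\ref{L:phi}, replacing the role of the third distinguished vertex by a trisection. Concretely, I would define $\psi$ as the slicing map applied to a type~$II$ trisection. Given a bicellular map $\mathfrak{b}_{g+1}$ with vertex $v$ carrying a distinguished type~$II$ trisection $\tau$, let $C_\tau=\{a_1,a_2,a_3\}$ be the half-edge set of Definition~\ref{D:slicing}, so that $a_1=\min_\gamma v$, $a_3=\sigma(\tau)$, and $a_2$ is the smallest half-edge to the left of $a_3$ exceeding it. By construction $a_1<_\gamma a_3<_\gamma a_2$, so the half-edges are intertwined and Lemma~\ref{L:2faces} applies: slicing $v$ via $C_\tau$ yields either a bicellular map $\mathfrak{b}_g$ or a pair of unicellular maps $(\mathfrak{u}_{g_1},\mathfrak{u}_{g+1-g_1})$, the latter with distributed half-edges.

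The crucial distinction from Lemma~\ref{L:phi} lies in the fate of $a_3$. Slicing splits $v$ into blocks headed by $a_1,a_2,a_3$; in the type~$I$ case all three become $<_\gamma$-minima of their vertices, whereas in the type~$II$ case $a_3$ is by definition \emph{not} the minimum of its vertex. I would then set $\tau':=\sigma^{-1}(a_3)$ in the sliced map and verify that $\tau'$ is a trisection: its $\sigma$-image $a_3$ fails to be a vertex-minimum, which is exactly the type~$II$ hypothesis, so it remains only to check that $\tau'$ is a down-step, and this follows from the local rewiring performed by slicing. The remaining two heads $a_1,a_2$ are the $<_\gamma$-minima of vertices $v_1,v_2$, and the intertwining together with the new boundary order yields the required chain $\min_\gamma v_1<_\gamma\min_\gamma v_2<_\gamma\min_\gamma v(\tau')$. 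Thus $\psi$ outputs a $4$-tuple in $\nu_{b,g}(n)$ or, in the disconnected case, a $5$-tuple in $\kappa_{g+1}(n)$, and the genus bookkeeping is inherited verbatim from Lemma~\ref{L:2faces}.

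For the inverse I would use gluing. Given $(\mathfrak{b}_g,v_1,v_2,\tau)\in\nu_{b,g}(n)$, set $a_1=\min_\gamma v_1$, $a_2=\min_\gamma v_2$, and $a_3=\sigma(\tau)$; the ordering hypothesis guarantees $a_1<_\gamma a_2$, and since $\tau$ is a trisection, $a_3=\sigma(\tau)$ is not a vertex-minimum. Gluing via $\{a_1,a_2,a_3\}$ produces $\mathfrak{b}_{g+1}$ in which $\sigma^{-1}(a_3)$ is a distinguished trisection; because $a_3$ was not a minimum before gluing it remains a non-minimum afterwards, so this trisection is of type~$II$. The $5$-tuples of $\kappa_{g+1}(n)$ are handled identically, gluing a distributed pair into a connected bicellular map. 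Since slicing and gluing are mutually inverse by Lemma~\ref{L:2faces}, one obtains $\chi\circ\psi=\mathrm{id}$ and $\psi\circ\chi=\mathrm{id}$.

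The main obstacle I anticipate is the careful verification that $\tau'=\sigma^{-1}(a_3)$ is genuinely a trisection of the correct type after slicing, that is, confirming it is a down-step in the \emph{new} boundary order and that the chain $\min_\gamma v_1<_\gamma\min_\gamma v_2<_\gamma\min_\gamma v(\tau')$ holds in that reordered map rather than in the original. Since slicing alters the boundary components, these order relations must be re-derived from the explicit rewiring in the two cases of Lemma~\ref{L:2faces}, and the type~$II$ condition must be shown to be precisely the obstruction preventing $a_3$ from becoming a fresh minimum. Once this local analysis is in place, well-definedness, the distributed condition, and bijectivity follow as in Lemma~\ref{L:phi}.
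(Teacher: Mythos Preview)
Your proposal is correct and follows essentially the same route as the paper: define $\psi$ by slicing at $C_\tau=\{a_1,a_2,a_3\}$, argue that the type~$II$ hypothesis is exactly what keeps $a_3$ from becoming the $\hat\gamma$-minimum of its new vertex so that $\tau$ persists as a trisection (the paper records this as $\hat\sigma(\tau)=a_3$, $a_3<_{\hat\gamma}\tau$, and the existence of some $h_i<_{\hat\gamma}a_3$), and take gluing via $(\min v_1,\min v_2,\hat\sigma(\tau))$ as the inverse. The obstacle you flag---checking the down-step and the chain $\min v_1<_{\hat\gamma}\min v_2<_{\hat\gamma}\min v(\tau)$ in the \emph{sliced} order---is indeed the one nontrivial verification, and the paper handles it at the same level of detail you anticipate.
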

%%%
%%%%%%%%%%%%%%%%%%%%%%%%%%%%%%%%%%%%%%%%%%%%%%%%%%%%%%%%%%%%%%%%%%%%%%%%%%%%%%%%%%%%%%%%%%
%%%
\begin{proof}
Let $\mathfrak{b}_{g+1}(n)$ be a bicellular map of genus $(g+1)$ with vertex $v$ and
distinguished type $II$ trisection $\tau$. Let $\psi$ be the slicing of
$v$ via $C=\{a_1,a_2,a_3\}$. Where the $a_i$ are chosen as in
Lemma~\ref{L:phi}. Lemma~\ref{L:2faces} guarantees that $\psi$ generates
either a bicellular map or a pair of unicellular maps and in the latter case
$v_1, v_2$ and $\tau$ are distributed.
However, slicing does not render $a_3$ as the minimum of $v_3$, since the
trisection is type $II$. In fact, $\tau$ is again a trisection of $v_3$,
since, by construction,
$$
\hat{\sigma}(\tau)=a_3 \ \text{\rm and} \ a_3<_{\hat{\gamma}}\tau
$$
and there exist some $h_i \in \{h_3^1,h_3^2,\cdots h_3^{l_3}\}$ such that
$h_i <_{\hat{\gamma}} a_3$. Therefore
$$
\psi\colon D_{b,g+1}^{II}(n) \rightarrow \nu_{b,g}(n) \dot\bigcup  \kappa_{g+1}(n)
$$
is well-defined.

We proceed by specifying the inverse of $\phi$, $\chi$. Suppose we are given
a bicellular map $\mathfrak{b}_g(n)$ or a pair of unicellular maps
$(\mathfrak{u}_{g_1},\mathfrak{u}_{g+1-g_1}) \in
\kappa(n)$ ) with two vertices $v_1, v_2$ and a trisection $\tau$. In case
of $(\mathfrak{u}_{g_1},\mathfrak{u}_{g+1-g_1})$ $v_1, v_2$ and $\tau$ are
distributed. $v_i$ and
$\tau$ have the property
$$
\min_{\hat{\gamma}}{v_1} <_{\hat{\gamma}} \min_{\hat{\gamma}} v_2 <_{\hat{\gamma}}
\min_{\hat{\gamma}} V(\tau).
$$
Then we glue via the half-edges $a_1=\min_{\hat{\gamma}}{v_1},
a_2=\min_{\hat{\gamma}} v_2$ and $a_3=\hat{\sigma}(\tau)$.
By construction, $a_3$ is not minimal at $v$, whence $\tau$ is, after gluing,
a type $II$ trisection. Lemma~\ref{L:2faces} shows that the image of this
gluing is contained in
$D_{b,g+1}^{II}(n)$, whence
$$
\chi\colon \nu_{\mathfrak{b},g}(n) \dot\bigcup  \kappa_{g+1}(n)
\rightarrow D_{b,g+1}^{II}(n)
$$
is well-defined. By construction we have
$\psi\circ \chi=\text{\rm id}$ and $\chi\circ \psi=\text{\rm id}$
and $\psi$ is a bijection. 
\end{proof}

%%%
%%%%%%%%%%%%%%%%%%%%%%%%%%%%%%%%%%%%%%%%%%%%%%%%%%%%%%%%%%%%%%%%%%%%%%%%%%%
%%%
\section{The bijection}\label{S:genus}
%%%
%%%%%%%%%%%%%%%%%%%%%%%%%%%%%%%%%%%%%%%%%%%%%%%%%%%%%%%%%%%%%%%%%%%%%%%%%%%
%%%

Let $D_{b,g}(n)=D_{b,g}^{I}(n)\cup D_{b,g}^{II}(n)$ be the set of bicellular maps of
genus $g$ with $n$ edges and a distinguished trisection. Let $B_g(n)^{t}$ denote
the set of a bicellular map of genus $g$ with $n$ edges and $t$ distinguished
vertices. Finally, let $(U_{g_1}(m), U_{g-g_1}(n-m))^{t}$, for $0\leq  g_1 \leq g$
denote the set of pairs of unicellular maps $(\mathfrak{u}_{g_1}(m),
\mathfrak{u}_{g-g_1}(n-m))$ with $m$ and $n-m$ edges and $t$ distinguished,
distributed vertices.

%%%
%%%%%%%%%%%%%%%%%%%%%%%%%%%%%%%%%%%%%%%%%%%%%%%%%%%%%%%%%%%%%%%%%%%%%%%%%%%%%%%%%%%%%%%%%%
%%%
\begin{theorem}\label{T:main}
There exists a bijection
\begin{equation}\label{E:B-slicing}
\begin{split}
&\quad
\Xi_b \colon D_{b,g}(n) \rightarrow \\
& \left({\dot\cup}_{p=0}^{g-1} B_{p}(n)^{2g-2p+1} \right)
  {\dot\bigcup} \, \left( {\dot\cup}_{g_1=0}^{p} {\dot\cup}_{m=0}^{n}{\dot\cup}_{p=0}^{g}
\left( U_{g_1 }(m), U_{p-g_1} (n-m )\right)^{2g-2p+3} \right).
\end{split}
\end{equation}
\end{theorem}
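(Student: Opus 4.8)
The plan is to construct $\Xi_b$ by iterating the two slicing bijections $\phi$ of Lemma~\ref{L:phi} and $\psi$ of Lemma~\ref{L:psi} along the distinguished trisection, and to establish the statement by induction on the genus $g$. Given $(\mathfrak{b}_g,\tau)\in D_{b,g}(n)=D_{b,g}^{I}(n)\cup D_{b,g}^{II}(n)$, I would first read off from Def.~\ref{D:slicing} whether $\tau$ is of type $I$ or type $II$ and apply the corresponding bijection. A type $I$ trisection is terminal: $\phi$ sends it either to $B_{g-1}(n)^{3}$ or to a distributed pair in $(U_{g_1},U_{g-g_1})^{3}$, and in neither output does a trisection survive. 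A type $II$ trisection is recursive: $\psi$ sends it either to a $4$-tuple $(\mathfrak{b}_{g-1},v_1,v_2,\tau')\in\nu_{b,g-1}(n)$ or to a $5$-tuple in $\kappa_{g}(n)$, and in both cases two distinguished vertices $v_1,v_2$ together with a fresh trisection $\tau'$ are produced.

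The inductive core is the connected type $II$ case. Here I would detach $v_1,v_2$, apply the induction hypothesis to $(\mathfrak{b}_{g-1},\tau')\in D_{b,g-1}(n)$, and then re-adjoin $v_1,v_2$ to the resulting distinguished-vertex configuration; the ordering $\min_{\gamma}v_1<_{\gamma}\min_{\gamma}v_2<_{\gamma}\min_{\gamma}v(\tau')$ guaranteed by $\psi$ is exactly what makes this re-attachment canonical. Since by induction $\Xi_b$ carries $(\mathfrak{b}_{g-1},\tau')$ into $B_p(n)^{2(g-1)-2p+1}$ or into a distributed pair $(U_{g_1},U_{p-g_1})^{2(g-1)-2p+3}$, re-adjoining the two carried vertices raises each exponent by exactly $2$, producing the $B_p(n)^{2g-2p+1}$ and $(U_{g_1},U_{p-g_1})^{2g-2p+3}$ of \eqref{E:B-slicing}. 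Abstractly: every connected slice lowers the genus by one, each recursive (type $II$) slice contributes $+2$ to the vertex count, the single terminal (type $I$) slice contributes $+3$, and telescoping these against the genus drop from $g$ to the final genus $p$ reproduces the exponent $2g-2p+1$.

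It remains to follow the disconnected branch. By Lemma~\ref{L:2faces} a slice first breaks connectivity precisely when $E_{con}\subseteq E_{mov}$, and, in contrast to a connected slice, such a slice leaves the total genus unchanged (the pair $(\mathfrak{u}_{g_1},\mathfrak{u}_{g-g_1})$ has genus sum $g$) while still injecting distinguished objects; this single ``free'' slice is what raises the unicellular exponent to $2g-2p+3$, two above its bicellular counterpart. If the breaking slice is type $I$ we are already terminal, landing at genus sum $p=g$ in $(U_{g_1},U_{g-g_1})^{3}$. If it is type $II$ (the $\kappa_g(n)$ output) the surviving trisection must be processed further; since the object is now a pair of unicellular maps and remains so, I would continue slicing inside the component carrying the trisection by means of the unicellular trisection bijection of \citep{Chapuy:11,fenix}, whose identical type $I$/type $II$ dichotomy drives the genus sum down to any $p\in\{0,\dots,g\}$ while accumulating exactly $2g-2p+3$ distributed vertices. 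The distributed condition is preserved throughout because no slice can move all of the marked objects into a single component.

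For the inverse I would reverse the recursion by iterated gluing through the maps $\chi$ of Lemmas~\ref{L:phi} and~\ref{L:psi}: from a terminal configuration one selects, in the order induced by $<_{\gamma}$, the minimal distinguished vertices as the gluing sites $a_1,a_2,a_3$ (or $a_1,a_2$ together with the reconstructed trisection), glues, and repeats. Since every individual slice is invertible and its type is determined by the local picture at the gluing site, the composite is invertible one slice at a time. I expect the main obstacle to be exactly this inverse direction: proving that the reconstruction is unambiguous, i.e.~that the $<_{\gamma}$-order on the accumulated marked vertices recovers the precise sequence of gluings and correctly negotiates the unique connected-to-disconnected transition, so that $\chi\circ\Xi_b=\mathrm{id}$ and $\Xi_b\circ\chi=\mathrm{id}$ hold globally rather than merely step by step.
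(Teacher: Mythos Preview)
Your approach is essentially the paper's: iterate the slicing bijections $\phi$ and $\psi$ on the distinguished trisection until it becomes type~$I$, then slice once more; the paper phrases this as a direct iteration rather than as an induction on $g$, but the content is identical. Two remarks. First, your explicit appeal to Chapuy's unicellular bijection after the disconnecting slice is the right move and is in fact more careful than the paper, which continues to cite only Lemmas~\ref{L:phi} and~\ref{L:psi} even though their domains are bicellular; the underlying slicing operation is the same, so nothing goes wrong, but you have correctly isolated the point. Second, the obstacle you anticipate for the inverse---recovering the exact gluing order from the terminal data---is handled in the paper by the observation that each slice leaves the $<_{\gamma}$-order of the already-produced distinguished vertices untouched (slicing does not affect half-edges between the plant and $a_1$), so the accumulated sequence $v_1<_{\gamma}\cdots<_{\gamma} v_{2g-2p+1}$ is canonically ordered and the gluings can be undone from the top; this is precisely the mechanism your last paragraph is reaching for.
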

%%%
%%%%%%%%%%%%%%%%%%%%%%%%%%%%%%%%%%%%%%%%%%%%%%%%%%%%%%%%%%%%%%%%%%%%%%%%%%%%%%%%%%%%%%%%%%
%%%
\begin{proof}
Suppose we are given a bicellular map $\mathfrak{b} \in D_{b,g}(n)$ with distinguished
trisection $\tau$. Then we can recursively slice $\tau$, as long as it
remains a type $II$ trisection. Clearly, $\tau$ must, after a finite number
of slicings, become of type $I$ and one more slicing resolves the latter
into three distinguished vertices.
In case of slicing into a pair of unicellular maps the distinguished vertices
are distributed.
Each slicing of a type $II$ trisection produces vertices
$$
\min_{\gamma}v_1 <_{\gamma} \min_{\gamma } v_2 <_{\gamma} \min_{\gamma} v_{\tau},
$$
and we write as $v_1<v_2<\tau$ for short. Since slicing does
not affect the order of the half-edges between the plant and the
minimum half-edge of the triple $\{a_1,a_2,a_3\}$,
iterated slicings produces a sequence $v_1<v_2<\cdots< v_{2g-2p+1}$.

According to Lemma~\ref{L:phi} and Lemma~\ref{L:psi}, the slicing of trisections of
type $II$ and $I$ are indeed bijections. Furthermore, every slicing
decreases topological genus by exactly $1$ or $0$(lose connectivity).
 As a result, after iteratively
slicing the type II trisections, we obtain a type $I$ trisection. Then
one more slicing generates an element of
$$
\left({\dot\cup}_{p=0}^{g-1} B_{p}(n)^{2g-2p+1} \right)
  {\dot\bigcup} \, \left( {\dot\cup}_{g_1=0}^{p}
 {\dot\cup}_{m=0}^{n}{\dot\cup}_{p=0}^{g}
\left( U_{g_1 }(m), U_{p-g_1} (n-m )\right)^{2g-2p+3} \right)
$$
and $\Xi_b$ is accordingly well-defined. Clearly, $\Xi_b$ is as the
composition of bijections a bijection. 
\end{proof}
%%%

Let ${\sf U}_g(n)$ and ${\sf B}_g(n)$ denote the number of unicellular maps with genus
$g$ and $n$ half-edges. Using the trisection lemma, Euler's formula, and
Theorem~\ref{T:main}, we obtain the following identity:

%%%
%%%%%%%%%%%%%%%%%%%%%%%%%%%%%%%%%%%%%%%%%%%%%%%%%%%%%%%%%%%%%%%%%%%%%%%%%%%%%%%
%%%
\begin{corollary}\label{T:Rec-Bi}
\begin{equation}\label{E:2g+2}
\begin{split}
&2(g+1){\sf B}_g(n)= \sum_{i=0}^{g-1} {n-2i  \choose 2g-2i+1 } \, {\sf B}_{i}(n) \,  +\\
&  \sum_{i=0}^{g}  \sum_{g_1=0}^{i} \sum_{m = 0}^{n}
\sum_{k =1}^{2g-2i+2}\left({\mu_{g_1} \choose k} {\nu_{g_1,i}\choose 2g-2i+3-k} {\sf U}_{g_1}(m)
{\sf U}_{i-g_1}(n-m)\right)
\end{split}
\end{equation}
where
\begin{eqnarray*}
\mu_{g_1}        & = & m+1-2g_1 \\
\nu_{g_1,i}     & = & n-m -2i +2g_1+1.
\end{eqnarray*}
\end{corollary}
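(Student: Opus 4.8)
The plan is to take the bijection $\Xi_b$ of Theorem~\ref{T:main} and simply compare the cardinalities of its domain and (disjoint-union) codomain, evaluating each piece by counting distinguished vertices, with all vertex totals supplied by Euler's formula. For the left-hand side, the domain $D_{b,g}(n)$ consists of genus-$g$ bicellular maps with $n$ edges carrying one distinguished trisection, so by Lemma~\ref{L:trisection} (every such map has exactly $2(g+1)$ trisections) one gets
$$
|D_{b,g}(n)| = 2(g+1)\,{\sf B}_g(n),
$$
which is the left side of \eqref{E:2g+2}. Since $\Xi_b$ is a bijection onto a disjoint union, it remains to add up the sizes of the two families on the right.

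For the first family I would count $|B_p(n)^{2g-2p+1}|$. A key simplifying observation is that $<_\gamma$ totally orders the vertices of a bicellular map by their minimal half-edge, so a choice of distinguished vertices carries a unique admissible ordering $v_1<v_2<\cdots$; hence one only counts \emph{subsets}, which is why binomial coefficients (not falling factorials) appear. Euler's formula — exactly the computation $v=n+2-2g$ used in the proof of Lemma~\ref{L:trisection} — gives $n+2-2p$ vertices for a genus-$p$ bicellular map with $n$ edges, of which the two pinned by the plants are unavailable as freely chosen distinguished vertices, leaving $n-2p$. This yields $|B_p(n)^{2g-2p+1}| = \binom{n-2p}{2g-2p+1}{\sf B}_p(n)$, and summing over $p=0,\dots,g-1$ and renaming $p\mapsto i$ produces the first sum in \eqref{E:2g+2}.

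For the second family, consider an ordered pair $(\mathfrak{u}_{g_1}(m),\mathfrak{u}_{p-g_1}(n-m))$. Euler's formula applied to each unicellular component (one face, $|\sigma|=|\alpha|+1-2g_i$) gives $\mu_{g_1}=m+1-2g_1$ and $\nu_{g_1,p}=(n-m)+1-2(p-g_1)$ vertices, with \emph{all} of them available (the planting constraint that excluded two vertices in the bicellular case has been spent in disconnecting the map). The $2g-2p+3$ distinguished vertices are \emph{distributed}, i.e.\ each component receives at least one; writing $k$ for the number falling in the first component, this forces $1\le k\le 2g-2p+2$. Choosing the marked vertices independently in the two components gives $\binom{\mu_{g_1}}{k}\binom{\nu_{g_1,p}}{2g-2p+3-k}$ (the global order across the ordered pair is again forced), and multiplying by ${\sf U}_{g_1}(m){\sf U}_{p-g_1}(n-m)$ and summing over $k,m,g_1$ and $p\mapsto i$ reproduces the second sum. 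Equating the two evaluations of $|D_{b,g}(n)|$ then gives the identity.

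The hard part is not the algebra but getting the binomial upper-indices exactly right, and this is where I would spend the care: one must justify that precisely \emph{two} vertices of the genus-$p$ bicellular map (those anchored by the two plants) are ineligible as distinguished vertices, while \emph{no} vertex of either unicellular component is ineligible, and one must verify that the ``distributed'' condition of Lemmas~\ref{L:phi} and~\ref{L:psi} translates into the summation range $1\le k\le 2g-2p+2$ rather than, say, $0\le k\le 2g-2p+3$. An off-by-a-constant slip here (an extra or missing plant vertex, or an endpoint in the $k$-sum) is the only real danger; once the vertex bookkeeping is fixed, the remaining computation is routine.
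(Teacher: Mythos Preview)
Your approach is essentially the paper's: both evaluate the two sides of the bijection $\Xi_b$, using Lemma~\ref{L:trisection} for the left-hand side and Euler's formula together with the distributed-vertex constraint for the right, with the paper phrasing the right-hand count in the gluing direction (``$g-i$ gluings, the first needing three vertices and each subsequent one two'') rather than your direct codomain count. One caution on the bookkeeping you yourself flag: the paper simply asserts $v=n-2i$ for the bicellular piece without any plant-exclusion argument, and your parallel claim that \emph{no} vertex is excluded on the unicellular side is not consistent with the bicellular reasoning (each unicellular component still carries one plant), so the precise justification of the upper indices deserves the extra care you anticipate.
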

%%%
%%%%%%%%%%%%%%%%%%%%%%%%%%%%%%%%%%%%%%%%%%%%%%%%%%%%%%%%%%%%%%%%%%%%%%%%%%%%%%%
%%%
\begin{proof}
Consider $\mathfrak{b}_{i}$ with $n$ arcs. According to Euler's formula,
we have $v=n-2i$ vertices. Since every gluing increases genus by $1$, there are
exactly $(g-i)$ gluings to derive genus $g$. The first gluing requires $3$ vertices
and generates a type $I$ trisection. Every following step requires $2$ vertices,
whence we choose $2(g-i)+1$ vertices. This interprets the binomial coefficients
${n-2i  \choose 2g-2i+1 }$. Similarly, if we are given a pair
$(\mathfrak{u}_{g_1},\mathfrak{u}_{i-g_1})$,
Suppose $\mathfrak{u}_{g_1}$ has $m$ edges and the other map has $(n-m)$ arcs.
Then $\mathfrak{u}_{g_1}$  and $\mathfrak{u}_{i-g_1}$ have $\mu_{g_1}= m-2g_1+1$, 
$\nu_{g_1,i}=n-m-2(i-g_1)+1$ vertices, respectively.
Every gluing step increases genus by $1$ except one step which 
connects the two unicellular maps to a bicellular map, 
preserving the genus. Thus $(g-i+1)$ gluings generate genus $g$ and 
we need to choose $2(g-i+1) +1$ distributed
vertices. Suppose we choose $k$ vertices on
$\mathfrak{u}_{g_1}$, then $k$ must satisfy
$1\leq k \leq 2(g-i+1)$ and the other vertices
 are selected from $\mathfrak{u}_{i-g_1}$.
This interprets the binomial coefficients
 ${\mu_{g_1} \choose k} {\nu_{g_1,i}\choose
2g-2i+3-k}$, whence the corollary. 
\end{proof}

Any bicellular map $\mathfrak{b}_{g}$ together with one 
of its $2(g+1)$ trisections is mapped via $\Xi_b$ into
a bicellular map of lower genus or a pair of unicellular 
maps. Note that either the topological genus decreases by at least one
or we lose connectivity and decompose into a pair of unicellular
maps. From \citep{Chapuy:11}, we know that a unicellular
map can be iteratively sliced into a planar tree.
Therefore, we have
%%%
%%%%%%%%%%%%%%%%%%%%%%%%%%%%%%%%%%%%%%%%%%%%%%%%%%%%%%%%%%%%%%%%%%%%%%%%%%%%%%%
%%%
\begin{corollary}\label{C:iterate}
Any bicellular map can be sliced into a pair of planar trees
and we have
\begin{equation} \label{E:induction2}
\begin{split}
&  {\sf B}_g(n)  =
\sum\limits_{g_0 < g_1< g_i<\cdots <g_{r-1} <g_r=g} 
 \sum_{b=1}^{r} \sum_{m=0}^{n} \\
&\left(  \sum_{l \geq 0}^{b-1} {b-1 \choose l}
 \prod_{i=i_v \in I_l} \frac{1}{2g_{i,A}} {m+1-2 g_{(i_{v-1},A)}
  \choose 2(g_i -g_{i-1}) +1} \right.\times \\ 
& \prod_{i=j_v \in J_{b-l-1}}
 \frac{1}{2g_{i,B}} { n-m+1-2g_{(j_{v-1},B)} \choose 2(g_i-g_{i-1})+1}\times  \\
 &\left. \left( \sum_{k \geq 1}^{2(g_b - g_{b-1})+1}  \frac{1} {2g_b+2} {m+1-2(g_{b-1,A})\choose k } 
 { n-m+1-2(g_{b-1,B}) \choose 2(g_b-g_{b-1})+1-k}\right) \right) \\
 & \times \prod_{i=b+1}^{r} \frac{1}{2g_i+2}{n-2g_{i-1} 
 \choose 2(g_i-g_{i-1})+1} \epsilon_0(m)\epsilon_0(n-m),
\end{split}
\end{equation}
where 
\begin{eqnarray*}
I_l&=&\{i_1,i_2,\cdots i_l\}, \ \ 
J_{b-l-1}=\{j_1,j_2,\cdots j_{b-l-1}\},\\
g_{b-1,A}&=&\sum\limits_{i_x=i_1}^{i_l}(g_{i_x}-g_{i_x-1}), \ \ 
g_{b-1,B}= g_{b-1}-g_{b-1,A},\\
g_{(i_{v-1},A)}&=&\sum\limits_{i_x=i_1}^{i_{(v-1)}}(g_{i_x}-g_{i_x-1}), \ 
 g_{(j_{v-1},B)}=\sum\limits_{j_y=j_1}^{j_{(v-1)}}(g_{j_y}-g_{j_y-1}),
\end{eqnarray*}
and $\epsilon_0(n)$ denotes the number of plane trees with $n$ edges.
\end{corollary}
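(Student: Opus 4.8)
\section*{Proof proposal}

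The plan is to establish both assertions of the corollary by fully unfolding the one-step recursion of Corollary~\ref{T:Rec-Bi}, interleaving it with the analogous unicellular recursion of \citep{Chapuy:11}. I would treat the structural statement and the enumerative identity together, since the formula \eqref{E:induction2} is merely the generating bookkeeping attached to the iterated slicing process, with the chain of genera recording the sequence of reductions.

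For the structural claim I would argue that repeated slicing terminates in a pair of plane trees. By Lemma~\ref{L:trisection} every bicellular map of genus $g$ carries $2(g+1)\ge 2$ trisections, so a trisection is always available, and by Lemma~\ref{L:2faces} (equivalently Theorem~\ref{T:main}) each slicing either lowers the genus by one while preserving bicellularity, or disconnects the map into a pair of unicellular maps whose genera sum to the current genus. Since the genus is a non-negative integer and a genus-$0$ bicellular map admits no bicellular reduction, only finitely many genus-reducing slicings can occur, so the process must eventually disconnect. Each resulting unicellular component is then, by \citep{Chapuy:11}, iteratively sliceable down to a unicellular map of genus $0$, i.e.\ a plane tree.

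For the identity I would iterate Corollary~\ref{T:Rec-Bi}, reading $2(g+1){\sf B}_g(n)=|D_{b,g}(n)|$ and solving for ${\sf B}_g(n)$; this produces the reciprocal trisection factor $\frac{1}{2g+2}$ and, through the bicellular term, the selection of $2(g_i-g_{i-1})+1$ vertices from the $n-2g_{i-1}$ vertices (Euler's formula, $v=n-2g_{i-1}$) of the lower-genus bicellular map. Recursively substituting the formula for each resulting ${\sf B}_{g_{i-1}}(n)$ produces the strictly decreasing chain $g=g_r>g_{r-1}>\cdots>g_b$ and the product $\prod_{i=b+1}^{r}\frac{1}{2g_i+2}\binom{n-2g_{i-1}}{2(g_i-g_{i-1})+1}$. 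The index $b$ marks the step at which the disconnection term of Corollary~\ref{T:Rec-Bi} is invoked; this contributes the middle factor, the sum over $k$ distributing the selected vertices between the two components across the edge split $m$, $n-m$, together with the factor $\frac{1}{2g_b+2}$. After disconnection I would apply the unicellular recursion of \citep{Chapuy:11} independently to each component, each step contributing a reciprocal trisection factor $\frac{1}{2g_{i,A}}$ (resp.\ $\frac{1}{2g_{i,B}}$) and a binomial selecting vertices from the current vertex count $m+1-2g_{(i_{v-1},A)}$ (resp.\ $n-m+1-2g_{(j_{v-1},B)}$), again read off from Euler's formula for unicellular maps, until both components reach plane trees, which yields the terminal $\epsilon_0(m)\epsilon_0(n-m)$.

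The main obstacle is the combinatorial bookkeeping, of two kinds. First, one must track the running genus, and hence the running vertex counts, so that every binomial argument is correct; this is precisely where the cumulative partial sums $g_{(i_{v-1},A)}$, $g_{(j_{v-1},B)}$, $g_{b-1,A}$, $g_{b-1,B}$ appear and where Euler's formula must be reapplied at each stage. Second, and more delicate, the two unicellular reduction histories run in parallel but must be recorded along the single totally ordered chain $g_0<\cdots<g_{b-1}$; I would handle this by summing over the ways to distribute the $b-1$ sub-steps between the two components, which yields the partition $\{1,\dots,b-1\}=I_l\,\dot\cup\,J_{b-l-1}$ and the interleaving factor $\binom{b-1}{l}$. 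Once this bookkeeping is set up, the identity follows by collecting the factors accumulated along the chain, the mutual invertibility of slicing and gluing at every bicellular stage being supplied by Lemmas~\ref{L:phi} and~\ref{L:psi} and at every unicellular stage by \citep{Chapuy:11}.
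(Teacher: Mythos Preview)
Your proposal is correct and follows the same route as the paper: iterate Theorem~\ref{T:main}/Corollary~\ref{T:Rec-Bi} along a strictly increasing chain of genera until disconnection, then invoke \citep{Chapuy:11} on each unicellular component, with Euler's formula supplying the vertex counts at every stage. In fact the paper itself offers only the one-sentence justification preceding the corollary and no formal proof of \eqref{E:induction2}, so your treatment of the bookkeeping---in particular the interleaving of the two unicellular histories via $I_l\,\dot\cup\,J_{b-l-1}$ and the factor $\binom{b-1}{l}$---is more explicit than what appears in the text.
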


%%%
%%%%%%%%%%%%%%%%%%%%%%%%%%%%%%%%%%%%%%%%%%%%%%%%%%%%%%%%%%%%%%%%
%%%
\section{Uniform generation}\label{S:uni}
%%%
%%%%%%%%%%%%%%%%%%%%%%%%%%%%%%%%%%%%%%%%%%%%%%%%%%%%%%%%%%%%%%%%
%%%
In this section, we show how to generate a bicellular map of 
given genus $g$ over $n$ edges with uniform probability. 

Here is the key idea: according to Corollary~\ref{C:iterate}, 
any bicellular map  decomposes into to a pair of plane trees 
$(\mathfrak{u}_0(m), \mathfrak{u}_0(n-m))$. Recruiting the inverse to 
slicing, the gluing, we can recover $\mathfrak{b}_g$. As each
bicellular map is generated with multiplicity $2(g+1)$, see 
Corollary~\ref{T:Rec-Bi}, we can employ our bijection to uniformly
generate bicellular maps of fixed topological genus $g$.

We first give the definition of glue path.

To this end, let $\mathfrak{b}_{g}$ denote a bicellular map of genus 
$g$ having $n$ edges, let $\mathfrak{p}_g$ denote a pair of
unicellular maps of genus sum $g$ and let $\mathfrak{m}$ denote
a map.
%%%%
%%%%%%%%%%%%%%%%%%%%%%%%%%%%%%%%%%%%%%%%%%%%%%%%%%%%%
%%%
\begin{definition}
A glue path starting from $\mathfrak{p}_0$ to $\mathfrak{b}_g$,
is  a sequence 
$$ 
\left( (\mathfrak{m}^0=\mathfrak{p}_{g_0=0},j_0=0),\ldots, 
(\mathfrak{m}^i,j_i), \ldots, 
(\mathfrak{m}^b=\mathfrak{b}_{g_b},j_b=1), \ldots, 
(\mathfrak{m}^{r}=\mathfrak{b}_{g},j_r=1\right),
$$
where $j_i\in \{0,1\}$ is a flag, an indicator variable for 
connectivity. $b$ is the first step where $j_i$ switches to $1$.
The corresponding sequence 
$$
\left( (g_0=0,j_0=0),\ldots, 
(g_i,j_i), \ldots,(g_r,j_r=1)\right).
$$ 
is called the signature of the glue path.
\end{definition}
%%%
%%%%%%%%%%%%%%%%%%%%%%%%%%%%%%%%%%%%%%%%%%%%%%%%%%%%%%%%%%%%%%%%%%
%%%

%%%
%%%%%%%%%%%%%%%%%%%%%%%%%%%%%%%%%%%%%%%%%%%%%%%%%%%%%%%%%%%%%%%%%%%
%%%

We shall generate $\mathfrak{b}_g(n)$ in two steps: first we construct 
a pair of planar trees $\mathfrak{p}_0=(\mathfrak{u}_0(m),\mathfrak{u}_0(n-m))$ 
with $n$ edges with uniform probability. There are  $\sum_{m=0}^{n} \epsilon_0(m)
\epsilon_0(n-m)$ such pairs. Second, starting from this pair, we generate a 
glue path to the target genus.

It is well-known how to generate a plane tree with $n$ 
edges in linear time \citep{Duchon}. For every pair
$\mathfrak{p}_0=(\mathfrak{u}_0(m),\mathfrak{u}_0(n-m))$, 
we next generate a glue path with uniform probability as follows.

For a given pair of unicellular maps $\mathfrak{p}_0$ and target genus $g$,
we first construct all signatures. 

For every such path we have $(g_0,j_0)=(0,0)$ and $(g_r,j_r)=(g,1)$.  
We can construct the signatures inductively. The induction basis is trivial,
as for the step, suppose we have arrived at $(g_i,j_i)$, where $0\leq g_i \leq 
g$ and $j_i=0$ or $1$. If $j_i=0$, then we can generate either 
$\{(g_i+1,0),(g_i+2,0), \ldots (g,0)\}$ or $\{(g_i,1),(g_i+2,1), \ldots (g,1)\}$.
If $j_i=1$, then we obtain $\{(g_i+1,1),(g_i+2,1), \ldots (g,1)\}$. 
Since the initial and final tuples are fixed, after finitely many iterations
we can thereby generate all the signatures, see Fig.~\ref{F:path}.
%%%
%%%%%%%%%%%%%%%%%%%%%%%%%%%%%%%%%%%%%%%%%%%%%%%%%%%%%%%%%
%%%
\begin{figure}[ht]
\begin{center}
\includegraphics[width=0.8\textwidth,height=0.4\textwidth]{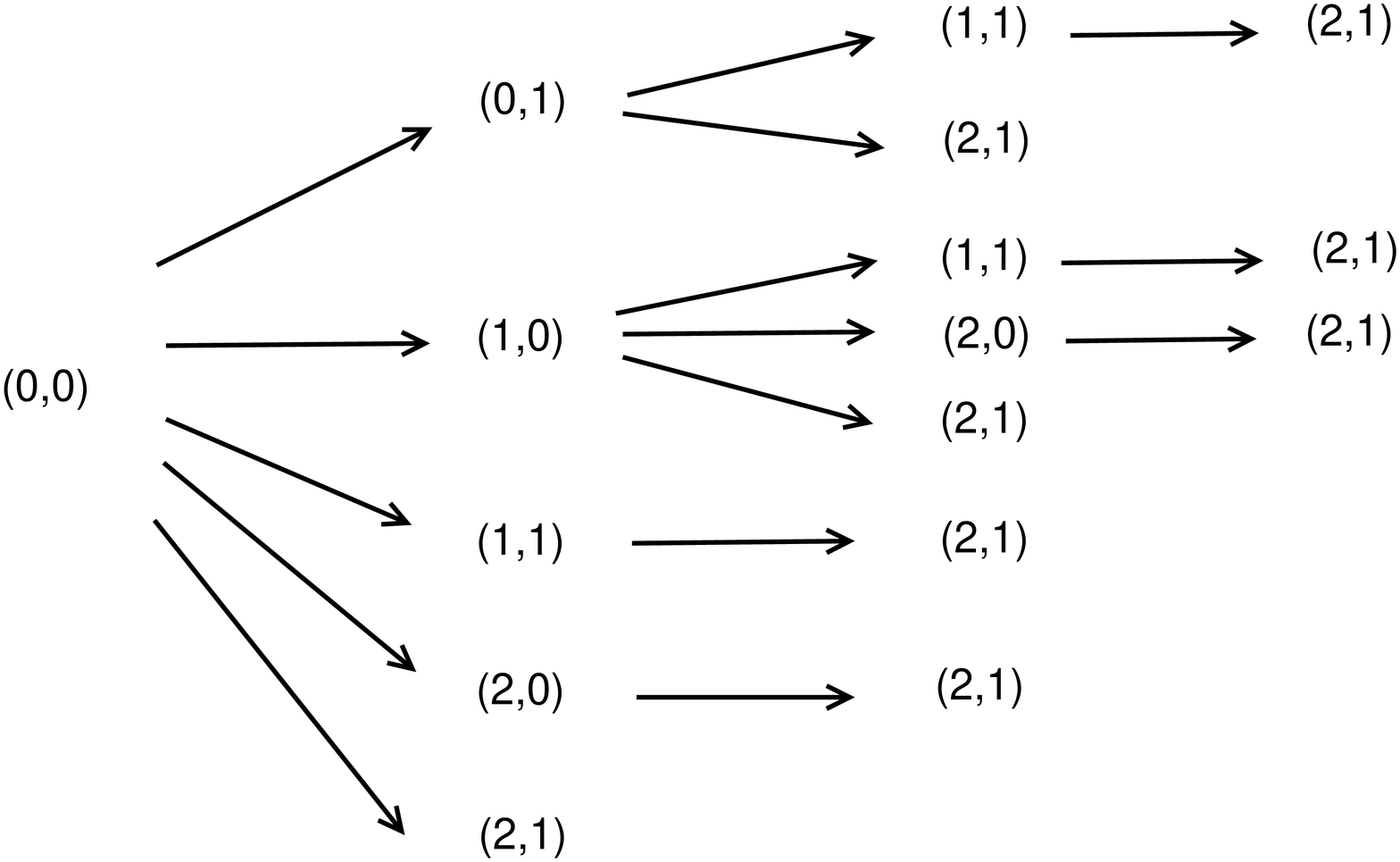}
\end{center}
\caption{the signatures for target genus $2$.}
\label{F:path}
\end{figure}
%%%
%%%%%%%%%%%%%%%%%%%%%%%%%%%%%%%%%%%%%%%%%%%%%%%%%%%%%%%
%%%

Every signature path has a probability. It is given by
the number of glue paths from $(\mathfrak{p}_0,0)$ to 
$(\mathfrak{m}_{g_{i+1}}, j_{i+1})$ having this 
signature, normalized by the total number of glue 
paths.
  
We arrive at
\begin{equation}\label{signa}
\begin{split}
\mathbb{P}&((g_{i+1}=t ,j_{i+1})\mid  (g_0,0) \ldots, (g_i,j_i),(g,1)) \\
&=\frac{\sum\limits_{(t_0=g_0,0) \ldots, 
(t_i=g_i,j_i), 
(t_{i+1}=g_{i+1}, j_{i+1})\cdots (t_r=g,1) }
\varOmega_{i+1}}
{\sum\limits_{(t_0=g_0,0) \ldots, (t_i=g_i,j_i) \cdots
(t_r=g,1) }\varOmega_{i}},
\end{split}
\end{equation}
where the $\varOmega_{i}$ denotes the sum of weights over all
the signatures that contain $(g_i,j_i)$.
We can thus compute all transition probabilities of
sates of signatures state and derive the transition matrix $M$.

Given $M$, we can construct a glue path as follows:

Suppose we are at step $i$ and we have constructed a map
$\mathfrak{m}^i$ with $(g_i,j_i)$. Then $(g_{i+1},j_{i+1})$ 
can be derived via $M$, by the process ${\tt NextTuple}$.

Next we select the vertices and gluing accordingly. Let $(A,B)$ be
the pair of plane trees. Suppose we have the tuple $(m_{g_i}, j_{i})$, 
${\tt NextTuple}$ produces $(g_{i+1}, j_{i+1})$ and if $j_{i+1}=0$, 
then we select the vertices ``locally'' on one of the unicellular 
maps via 
$$
\mathbb{P}(V_{i,1})= \frac{1}{{m+1-2g_{i,A}\choose 2(g_{i+1}-g_i)+1}}
$$
or
$$
\mathbb{P}(V_{i,2})= \frac{1}{{n-m+1-2g_{i,B}\choose 2(g_{i+1}-g_i)+1}}.
$$
In case of $j_{i+1}=1$ and $j_{i}=0$, we need to choose the vertices 
distributed i.e.~
$$
\mathbb{P}(V_{i,3})= \frac{1}{\sum_{k=1}\limits^{2(g_{i+1}-g_{i})}
{m+1-2g_{i,A}\choose k}
{n-m+1-2(g_{i}-g_{i,A}) \choose 2(g_{i+1}-g_{i})+1-k}}.
$$
Finally in case of $j_{i+1}=1$ and $j_{i}=1$ we can choose vertices 
arbitrarily, i.e.~
$$
\mathbb{P}(V_{i,4})= 
\frac{1}{{n-2g_{i}\choose 2(g_{i+1}-g_{i})+1}}.
$$

We refer to the above three cases of local, distributed and free
vertex selection as {\tt SelectVertex1}, {\tt SelectVertex2} and 
{\tt SelectVertex3}.

After the sequence of vertices $V_i$ is selected, a bicellular map 
$\mathfrak{b}^{i+1}$ is constructed by the process {\tt Glue}. Notice that in
accordance with Theorem~\ref{T:main} after every application of
{\tt Glue}, we normalize by $2g_i$, for $j_i=0$, or $2g_i+2$ in case 
of $j_i=1$. We present the pseudocode of the procedures in 
Algorithm~\ref{A:bi}.

%%%
%%%%%%%%%%%%%%%%%%%%%%%%%%%%%%%%%%%%%%%%%%%%%%%%%%%%%%%%%%%%%%%%%%%%%%%%%%
%%%
\begin{algorithm}
\begin{algorithmic}[1]
\STATE {\tt UniformBi-Matching}~($\mathfrak{m}^0=\mathfrak{p}^0, Targettuple$)
\STATE {$i \leftarrow 0$,$j \leftarrow 0$}
\WHILE {$(g_i,j_i) \neq Targettuple$}
\STATE {$(g_{i+1},j_{i+1}) \leftarrow {\tt NextTuple}~((g_0,0) \ldots, (g_i,j_i), Targettuple)$}
\IF    {$j_{i+1}=0$}
\STATE {$V_i \leftarrow {\tt SelectVertex1}~(\mathfrak{m}^i, 2(g_{i+1}-g_i)+1)$}
\ELSIF {$j_{i+1}=1 \& j_{i} =0 $}
\STATE {$V_i \leftarrow {\tt SelectVertex2}~(\mathfrak{m}^i, 2(g_{i+1}-g_i)+1)$}
\ELSIF {$j_{i+1}=1 \& j_{i} =1 $}
\STATE {$V_i \leftarrow {\tt SelectVertex3}~(\mathfrak{m}^i, 2(g_{i+1}-g_i)+1)$}
\ENDIF
\STATE {$\mathfrak{m}^{i+1} \leftarrow {\tt Glue}~(\mathfrak{m}^i, V_i)$, $i\leftarrow i+1$}
\ENDWHILE
\STATE \textbf{return} $\mathfrak{m}^i$
\caption {\small Generation of glue path for bicellular maps }
\label{A:bi}
\end{algorithmic}
\end{algorithm}
%%%
%%%%%%%%%%%%%%%%%%%%%%%%%%%%%%%%%%%%%%%%%%%%%%%%%%%%%%%%%%%%%%%%%%%%%%%%%
%%%

Since the target genus is fixed constant and since the intermediate genera
are monotone, the while-loop of Algorithm 1 is executed only a constant 
number of times. Using appropriate memorization techniques, {\tt NextTuple} 
and {\tt Glue} can be implemented in constant time. Furthermore 
{\tt SelectVertex1}, {\tt SelectVertex2} and {\tt SelectVertex3} have linear 
run-time complexity. Thus, combined with a linear time sampler for planar trees, 
our approach allows for the uniform generation of random $2$-backbone matchings 
in $O(n)$ time.

We accordingly obtain
%%%
%%%%%%%%%%%%%%%%%%%%%%%%%%%%%%%%%%%%%%%%%%%%%%%%%%%%%%%%%%%%%%%%
%%%
\begin{corollary}
Algorithm \ref{A:bi} generates uniformly bicellular maps.
\end{corollary}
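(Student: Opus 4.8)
The plan is to establish that Algorithm~\ref{A:bi} is a uniform sampler by showing two things: first, that every bicellular map $\mathfrak{b}_g(n)$ is in the image of the generation procedure, and second, that each such map is produced with a probability independent of the map itself. The engine driving both facts is the bijection $\Xi_b$ of Theorem~\ref{T:main}, whose inverse is exactly the iterated gluing that the algorithm implements. Running $\Xi_b^{-1}$ forward, a glue path starts from a pair of plane trees $\mathfrak{p}_0=(\mathfrak{u}_0(m),\mathfrak{u}_0(n-m))$ and rebuilds a bicellular map by successively gluing triples (or pairs) of distinguished vertices, each step either raising genus by one or restoring connectivity. Since the decomposition in Corollary~\ref{C:iterate} guarantees that \emph{every} $\mathfrak{b}_g(n)$ arises from some pair of plane trees via some glue path, surjectivity of the sampler is immediate from the surjectivity of the gluing reconstruction.

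\emph{First I would} make precise the probability with which a fixed target map $\mathfrak{b}_g(n)$ is generated. The procedure is a composition of independent random choices: (i) sample the pair of plane trees uniformly among the $\sum_{m=0}^{n}\epsilon_0(m)\epsilon_0(n-m)$ pairs; (ii) sample a signature according to the transition matrix $M$ built from~\eqref{signa}; and (iii) at each gluing step, sample the vertex configuration $V_i$ uniformly via {\tt SelectVertex1}, {\tt SelectVertex2}, or {\tt SelectVertex3}, according to the flag transition. The key point is that the vertex-selection probabilities $\mathbb{P}(V_{i,1}),\ldots,\mathbb{P}(V_{i,4})$ are precisely the reciprocals of the binomial coefficients appearing in Corollary~\ref{T:Rec-Bi}, i.e.~the number of ways of choosing the distinguished vertices at that step. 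Multiplying these reciprocals along a glue path exactly cancels the product of binomials that counts the distinct glue paths of the given signature terminating at $\mathfrak{b}_g(n)$, so that the probability of reaching \emph{a} terminal bicellular map depends only on the combinatorial weights and not on the internal structure of the particular map reached.

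\emph{Next I would} account for the multiplicity with which each bicellular map is produced. By the trisection Lemma~\ref{L:trisection}, the forward bijection $\Xi_b$ records one of the $2(g+1)$ trisections of $\mathfrak{b}_g$, so the reconstruction generates each bicellular map of genus $g$ with multiplicity exactly $2(g+1)$ --- equivalently, the running normalization by $2g_i$ (when $j_i=0$) or $2g_i+2$ (when $j_i=1$) after each {\tt Glue}, as prescribed in accordance with Theorem~\ref{T:main}, compensates exactly for this overcounting at each genus increment. Combining (i)--(iii) with the signature probabilities~\eqref{signa}, the total probability of producing a fixed $\mathfrak{b}_g(n)$ telescopes: the binomial normalizations from vertex selection cancel against the path-counting numerators in~\eqref{signa}, and the factors $1/(2g_i+2)$ cancel the trisection multiplicity. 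What survives is a quantity independent of $\mathfrak{b}_g(n)$, namely $1/{\sf B}_g(n)$, establishing uniformity.

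\emph{The hard part will be} the bookkeeping in the multiplication of probabilities along a glue path: one must verify that the numerators appearing in the signature probability~\eqref{signa} --- which are themselves sums over all signatures passing through a given state --- recombine correctly with the per-step vertex-selection reciprocals so that everything telescopes down to $1/{\sf B}_g(n)$, with the explicit closed form furnished by Corollary~\ref{C:iterate}. The subtlety is that a single map may be reachable along several distinct signatures (differing in where connectivity is restored and how genus is distributed between the two trees), so the argument requires summing over signatures and confirming that the total matches the normalization $\varOmega_i$ at each level; once this telescoping is checked, uniformity follows, and the linear run-time claim --- constant number of while-loop iterations, constant-time {\tt NextTuple} and {\tt Glue}, linear-time vertex selection and tree sampling --- completes the corollary.
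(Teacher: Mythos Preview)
Your proposal is correct and follows essentially the same approach as the paper: both arguments rest on the fact that the iterated gluing of Algorithm~\ref{A:bi} implements $\Xi_b^{-1}$ from Theorem~\ref{T:main}, together with the trisection multiplicity $2(g+1)$ from Lemma~\ref{L:trisection} that is absorbed by the per-step normalization. The paper's own proof is extremely terse---two sentences---simply asserting that plane trees are generated uniformly and that each transition $\mathfrak{m}^i\to\mathfrak{m}^{i+1}$ is a bijection (by Theorem~\ref{T:main}) made uniform by dividing by $2(g_{i+1}+1)$; your proposal spells out the probability bookkeeping (signature weights, vertex-selection reciprocals, telescoping against the binomials of Corollary~\ref{T:Rec-Bi}) that the paper leaves implicit.
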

%%%
%%%%%%%%%%%%%%%%%%%%%%%%%%%%%%%%%%%%%%%%%%%%%%%%%%%%%%%%%%%%%%%%
%%%
\begin{proof}
First, we generate the plane trees uniformly. Second, by construction, every  
transition from $\mathfrak{m}^i$ to $\mathfrak{m}^{i+1}$ is a bijection by 
Theorem~\ref{T:main} and uniform after normalizing by $2(g_{i+1}+1)$, whence the
corollary.
\end{proof}

Now we can extend our result in order to generate $2$-backbone diagrams of 
genus $g$ with uniform probability. The idea is as follows: first we 
uniformly generate a $2$-backbone matching of genus $g$ with $n$ arcs. 
Then we choose $(\ell-2n)$ unpaired vertices and insert them into the 
matching.

Let $\mathbb{P}_{d}(t=n|\ell, g)$ denote the probability of the 
$2$-backbone diagram of length $\ell$ and genus $g$ having exactly 
$n$ arcs, $0\le n \le \lfloor \ell/2\rfloor$. 
Let $\delta_g(\ell)$ denote the number of $2$-backbone diagrams of genus 
$g$ over $\ell$ vertices. Furthermore, let $\delta_g(\ell ,n)$ denote the 
number of diagrams of genus $g$ over $\ell$ vertices having exactly $n$ 
arcs, $2n\le \ell$. Denote the number of $2$-backbone matchings with 
$n$ arcs by $\xi_g(n)$. Then
\begin{eqnarray*}
\delta_g(\ell, n) & = &  {\ell \choose \ell-2n} \xi_g(n)\\
\delta_g(\ell) & = & \sum_{n=0}^{\lfloor l/2\rfloor} \delta_g(\ell, n)
=\sum_{n=0}^{\lfloor l/2\rfloor} {\ell \choose \ell-2n} \xi_g(n)
\end{eqnarray*}
and $\mathbb{P}_{d}(t=n|\ell, g)=\delta_g(\ell, n)/\delta_g(\ell)$. 

This leads to Algorithm~\ref{A:diagram}, which generates uniformly 
diagrams of length $\ell$ of genus $g$. Accordingly, a $2$-backbone 
diagram of genus $g$ over $\ell$ vertices
with exactly $n$ arcs is generated, which we denote by $D_g^2(l)$. 

%%%
%%%%%%%%%%%%%%%%%%%%%%%%%%%%%%%%%%%%%%%%%%%%%%%%%%%%%%%%%%%%%%%%%%%%%%%%%%
%%%
\begin{algorithm}
\begin{algorithmic}[1]
\STATE {\tt Uniform2BackboneDiagram}~($\ell, TargetGenus$)
\STATE {$n\leftarrow {\tt NumberofArcs}(\ell, g)$}
\STATE {$\mathfrak{p}_0 \leftarrow {\tt UnifomTrees}(n)$}
\STATE {$\mathfrak{b}_g \leftarrow {\tt UniformBi-Matching}(\mathfrak{p}_0, TargetGenus)$}
\STATE {$ D_g^2(l) \leftarrow {\tt InsertUnpairedVertices}( \mathfrak{b}_g , \ell)$}
\STATE \textbf{return} $D_g^2(l)$
\caption {\small }
\label{A:diagram}
\end{algorithmic}
\end{algorithm}
%%%
%%%%%%%%%%%%%%%%%%%%%%%%%%%%%%%%%%%%%%%%%%%%%%%%%%%%%%%%%%%%%%%%%%%%%%%%%
%%%
As for the subroutines:
\begin{itemize}
\item  {\tt NumberofArcs} returns $n$ with probability 
       $\mathbb{P}_{d}(t=n|\ell, g)$ and accordingly gives the number 
       of arcs in $2$-backbone diagram of length $\ell$,
\item {\tt UniformTrees} uniformly generates a pair of 
           planer trees with a total of $n$ arcs,
\item {\tt UniformBi-Matching} generates a $2$-backbone matching of genus 
       $g$ with $n$ arcs,
\item {\tt InsertUnpairedVertices} selects $(\ell-2n)$ vertices from 
      $\ell$ vertices as to be unpaired and inserts them.
\end{itemize}

The result of some experiments conducted in connection with the
generation of random matchings and diagrams are displayed
in Fig.~\ref{F:unifor}.
%%%
%%%
%%%%%%%%%%%%%%%%%%%%%%%%%%%%%%%%%%%%%%%%%%%%%%%%%%%%%%%%%
%%%
\begin{figure}[ht]
\begin{center}
\includegraphics[width=0.4\textwidth,height=0.4\textwidth]{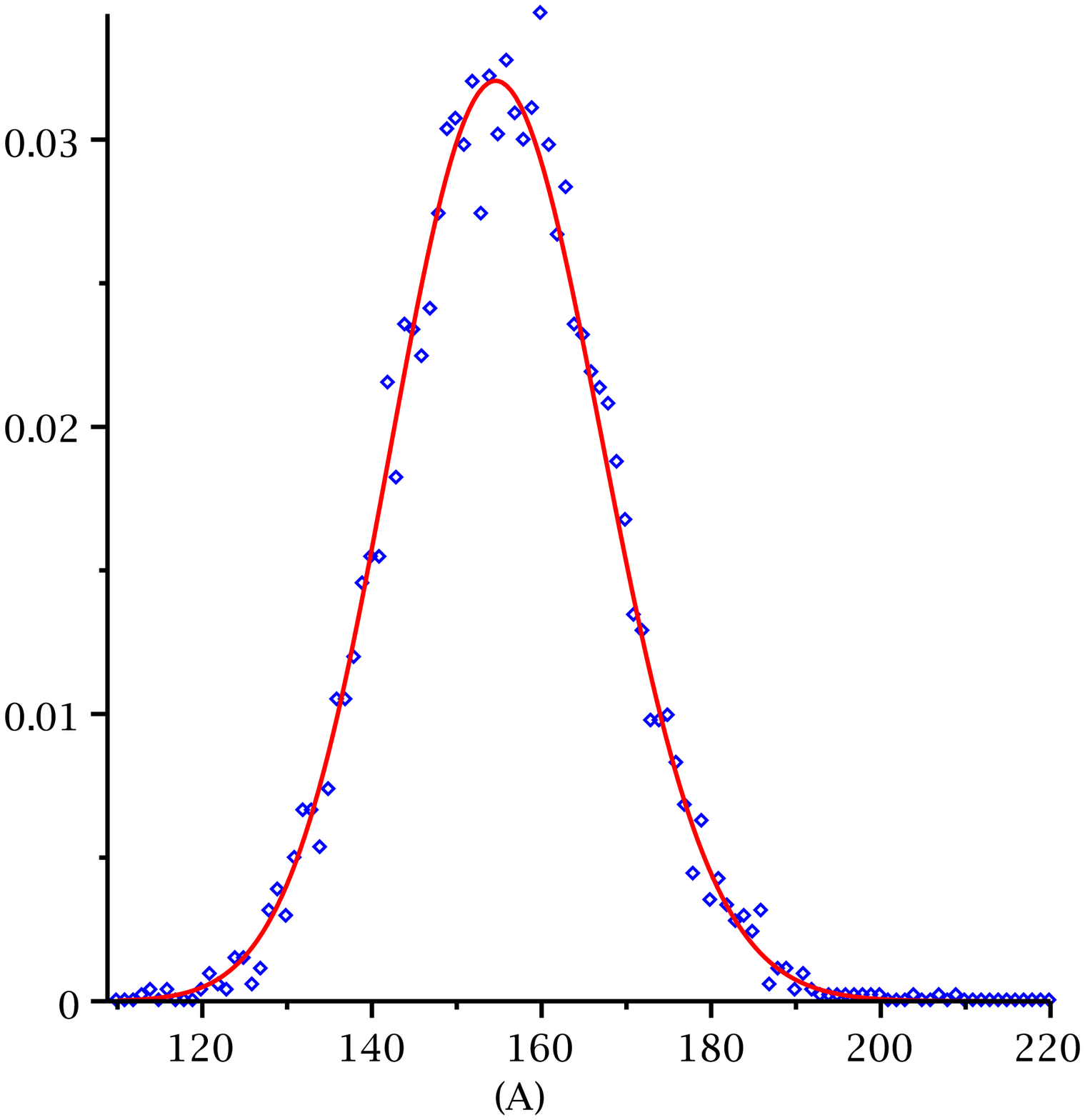}
\includegraphics[width=0.4\textwidth,height=0.4\textwidth]{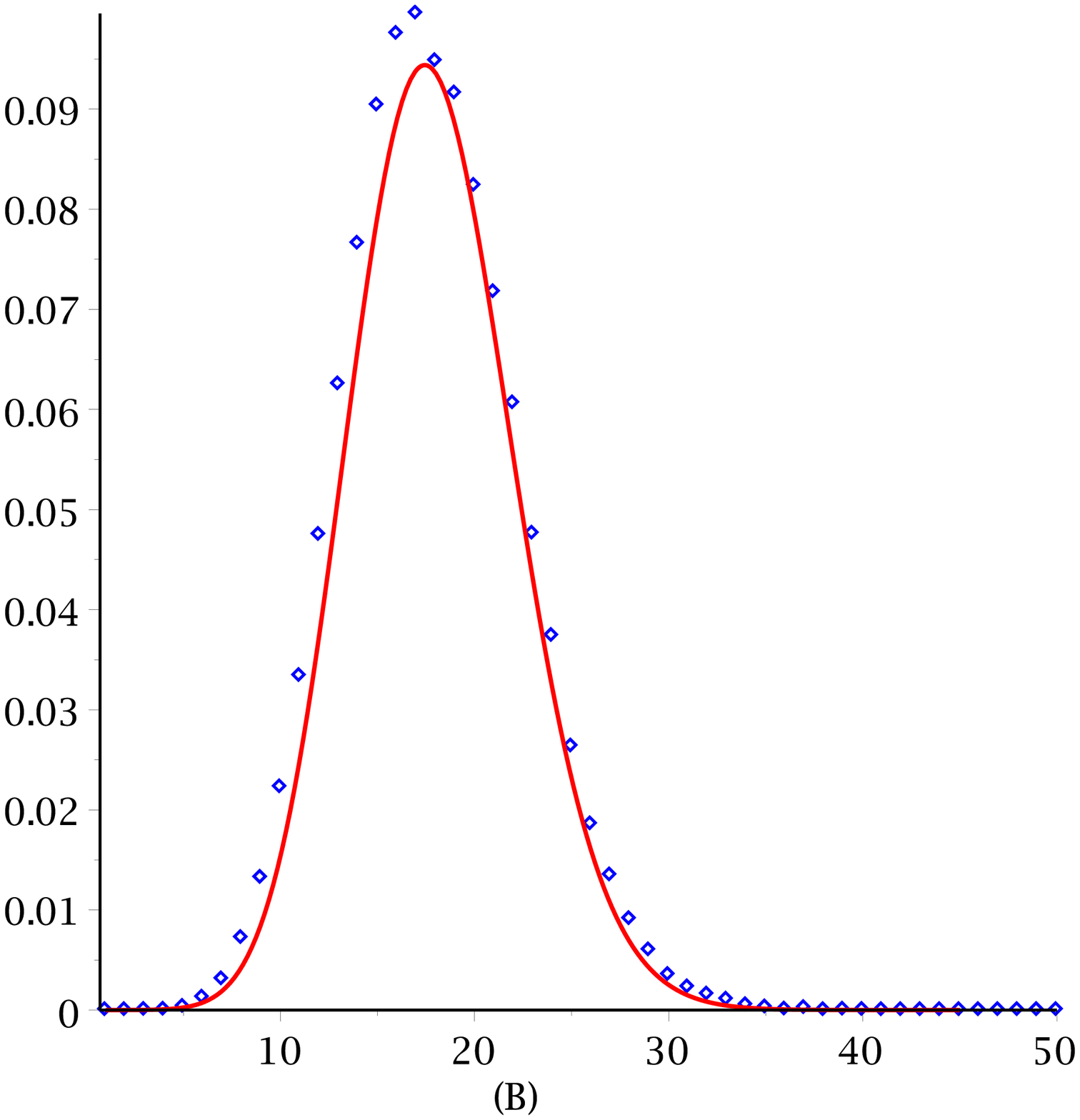}
\end{center}
\caption{ \small Uniform generation: $(A)$ matchings over $2$ backbones, $n=10$, $g=1$. 
 We generate $N=10^6$ matchings and display the frequencies of their multiplicities
 (blue dots) together with the Binomial coefficient of the uniform sampling.
 (B) The analog of $(A)$ for diagrams, i.e.~incorporating isolated vertices. Here we have $n=10$, $g=1$,  We generate $N=10^6$  diagrams and display the frequencies of their multiplicities (blue dots) together with  the Binomial coefficient of the uniform sampling.}
\label{F:unifor}
\end{figure}
%%%
%%%%%%%%%%%%%%%%%%%%%%%%%%%%%%%%%%%%%%%%%%%%%%%%%%%%%%%
%%%
%%%%%%%%%%%%%%%%%%%%%%%%%%%%%%%%%%%%%%%%%%%%%%%%%%%%%%%%%%%%%%%%
%%%

\section{Discussion}\label{S:dis}

%%%
%%%%%%%%%%%%%%%%%%%%%%%%%%%%%%%%%%%%%%%%%%%%%%%%%%%%%%%%%%%%%%%%
%%%

%%%
%%%%%%%%%%%%%%%%%%%%%%%%%%%%%%%%%%%%%%%%%%%%%%%%%%%%%%%%%%%%%%%%
%%%

We derived a uniform generation algorithm for RNA-RNA interaction structures of fixed
topological genus. The algorithm is very fast having only linear time complexity.
It allows immediately to obtain an abundance of statistical data on these structures.

In the following we shall consider biased sampling of RNA-RNA interaction structures. 
The bias is obtained by employing a simplified version of extending the bio-physical
loop-energy model of RNA secondary structures to RNA-RNA interaction structures. 
Here we restrict ourselves to the case of genus $0$ structures, but the treatment
of higher genera structures is straightforward from here. Note that genus $0$ 
interaction structures exhibit in general cross serial interactions between their two
backbone, i.e.~exhibit crossing arcs.

RNA structures are, due to the biophysical context, subject to specific constraints 
with respect to their free energy \cite{zuker1999}. The latter is oftentimes modelled 
as a function of the loops of the underlying RNA structure \cite{zuker1999}. 
This goes back to Waterman {\it et al.} 
\cite{waterman1978, waterman1978secondary, zuker1984rna, nussin, kleitman1970} 
who realized that the classic secondary structure recursion is compatible with the 
loop energy model. It is interesting to note that these loops actually correspond 
to faces in the fat graph model, that is boundary components. This phenomenon 
naturally extends to structures over any number of backbones and any topological 
genus. Their loops are also just topological boundary components and the framework
extend in a natural way, see Fig.~\ref{F:bb}. In case of RNA 
secondary structures, we find essentially three types of loops: hairpin loops,
interior loops (including helices and bulge loops) and multi-loops. 
The Poincar\'{e} duality described in Lemma~\ref{L:diagram} interchanges boundary
components and vertices, whence we have the following correspondences
\begin{itemize}
\item hairpin loops and vertices of degree one, 
\item interior loops and vertices of degree two and 
\item multi-loops and vertices of degree greater than two, see Fig. \ref{F:loop3}.
\end{itemize}
 %%%
 %%%%%%%%%%%%%%%%%%%%%%%%%%%%%%%%%%%%%%%%%%%%%%%%%%%%%%%%%%
 %%%
\begin{figure}[H]
\begin{center}
\includegraphics[width=0.8\textwidth]{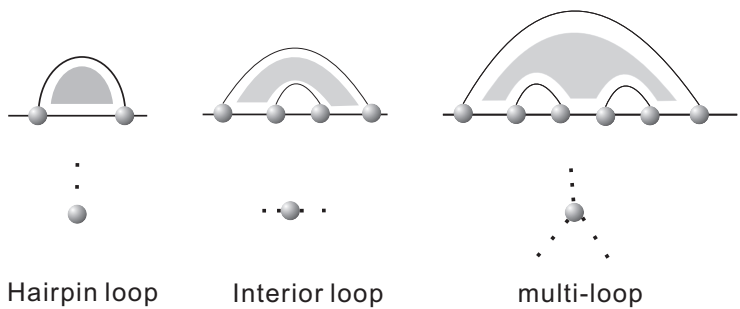}
\end{center}
\caption{The three loop types in the RNA secondary structures.}
\label{F:loop3}
\end{figure}
%%%
%%%%%%%%%%%%%%%%%%%%%%%%%%%%%%%%%%%%%%%%%%%%%%%%%%%%%%%%%%%%
%%%

Let $B_g$ denote an RNA-RNA interaction structure having length
$l$, $n$ arcs and genus $g$. Lemma~\ref{L:diagram} associates to
this diagram and a bicellular map, denoted by $\mathfrak{b}_g$.

In Section~\ref{S:genus} we discussed that given a bicellular map
$\mathfrak{b}_g$ together with a distinguished trisection, a finite 
number of vertex-slicings produces a pair of plane trees together with 
a collection of labeled vertices.

We showed in Theorem~\ref{T:main} that any such slicing is reversible, whence 
the decomposition via vertex slicings is unique. This means that we actually
have derived an unambiguous grammar that decomposes any RNA-RNA interaction 
structure of fixed topological genus into an (ordered) pair of secondary 
structures with some labeled loops.

Then the energy of such a structure$\eta(B_g)$ is given by
$$
\eta(B_g)=n\cdot b+ \eta(S_1^p)+\eta (S_2^{q}) + L_g,
$$
where $b$ is the energy contribution of an arc. $S_1^p$ ($S_2^q$) is a 
secondary structure with $p$ ($q$) marked loops. Furthermore, $\eta(S_1^p)=
\sum_{X_1} L^{X_1}$ and  $\eta(S_2^q)=\sum_{X_2} L^{X_2}$, where $X_1$ ($X_2$) 
are the set of unmarked loops in $S_1$ ($S_2$). 
$L_g$  represents the energy contribution of the labeled loops and the 
contribution of the gluing path.

To illustrate what happens here, let us have a look at the case of $g=0$. 
Suppose we are given a genus $0$ matching over $2$ backbones, $B_0$. According 
to Theorem~\ref{T:main} its dual bicellular map, $\mathfrak{b}_0$, corresponds to a 
pair of trees and together with three labeled vertices, denoted by $(T_1, T_2)$. 
The corresponding two  pseudoknot-free secondary structure with three labeled 
boundary components are denoted by $(S_1\cup S_2)$. Thus we have 
$\eta(B_0)=\eta(S_1^{1})+\eta (S_2^{2}) + L_0=\eta (S_1^1)+\eta(S_2^2)+ L^{mul} + 
\epsilon$, where $L_0=L^{mul} + \epsilon$, since 
the three vertices after gluing will form a vertex of degree at least $3$, 
corresponding to a multi-loop. Finally, $\epsilon$ can be regarded as the 
contribution of the particular type of pseudoknot being glued.
The situation is particularly transparent for $g=0$, since there are only two 
shapes $E$ and $F$, where a shape is a diagram without unpaired vertices and
$1$-arcs in which all stacks (parallel arcs) have size one.
These two shapes are depicted in Fig.~\ref{F:genus01} and Fig.~\ref{F:genus02}, 
where we show in addition these two shapes and the pair of secondary structures 
with three labeled boundary components they slice into.
we accordingly derive  
$$
\eta(E)= \eta(S_1^1) + \eta(S_2^2)+ L_0=L^{mul} + \epsilon, \quad
\eta(F)= \eta(S_1^1) + \eta(S_2^2)+ L_0= 2L^{mul} + \epsilon. 
$$
%%%
%%%%%%%%%%%%%%%%%%%%%%%%%%%%%%%%%%%%%%%%%%%%%%%%%%%%%%%%%
%%%
\begin{figure}[H]
\begin{center}
\includegraphics[width=0.8\textwidth]{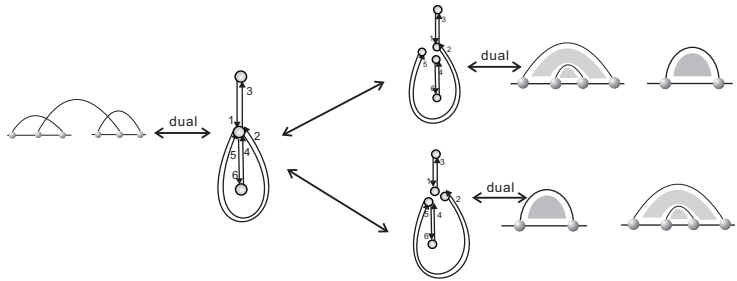}
\end{center}
\caption{ \small the $E$-shape and the pair of secondary 
          structures it slices into.}
\label{F:genus01}
\end{figure}
%%%
%%%%%%%%%%%%%%%%%%%%%%%%%%%%%%%%%%%%%%%%%%%%%%%%%%%%%%%
%%%
%%%
%%%%%%%%%%%%%%%%%%%%%%%%%%%%%%%%%%%%%%%%%%%%%%%%%%%%%%%%%
%%%
\begin{figure}[H]
\begin{center}
\includegraphics[width=0.8\textwidth]{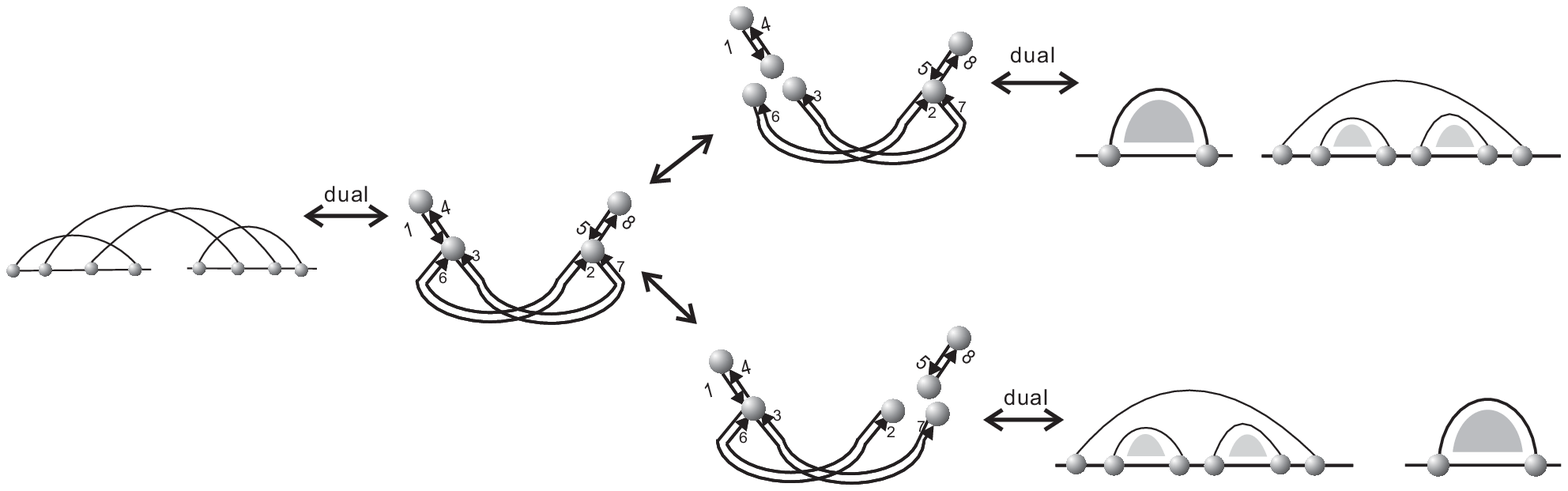}
\end{center}
\caption{ \small the $F$-shape and the pair of secondary 
          structures it slices into. }
\label{F:genus02}
\end{figure}
%%%
%%%%%%%%%%%%%%%%%%%%%%%%%%%%%%%%%%%%%%%%%%%%%%%%%%%%%%%
%%%

%%%
%%%%%%%%%%%%%%%%%%%%%%%%%%%%%%%%%%%%%%%%%%%%%%%%%%%%%%%%%
%%%
\begin{figure}[H]
\begin{center}
\includegraphics[width=0.8\textwidth]{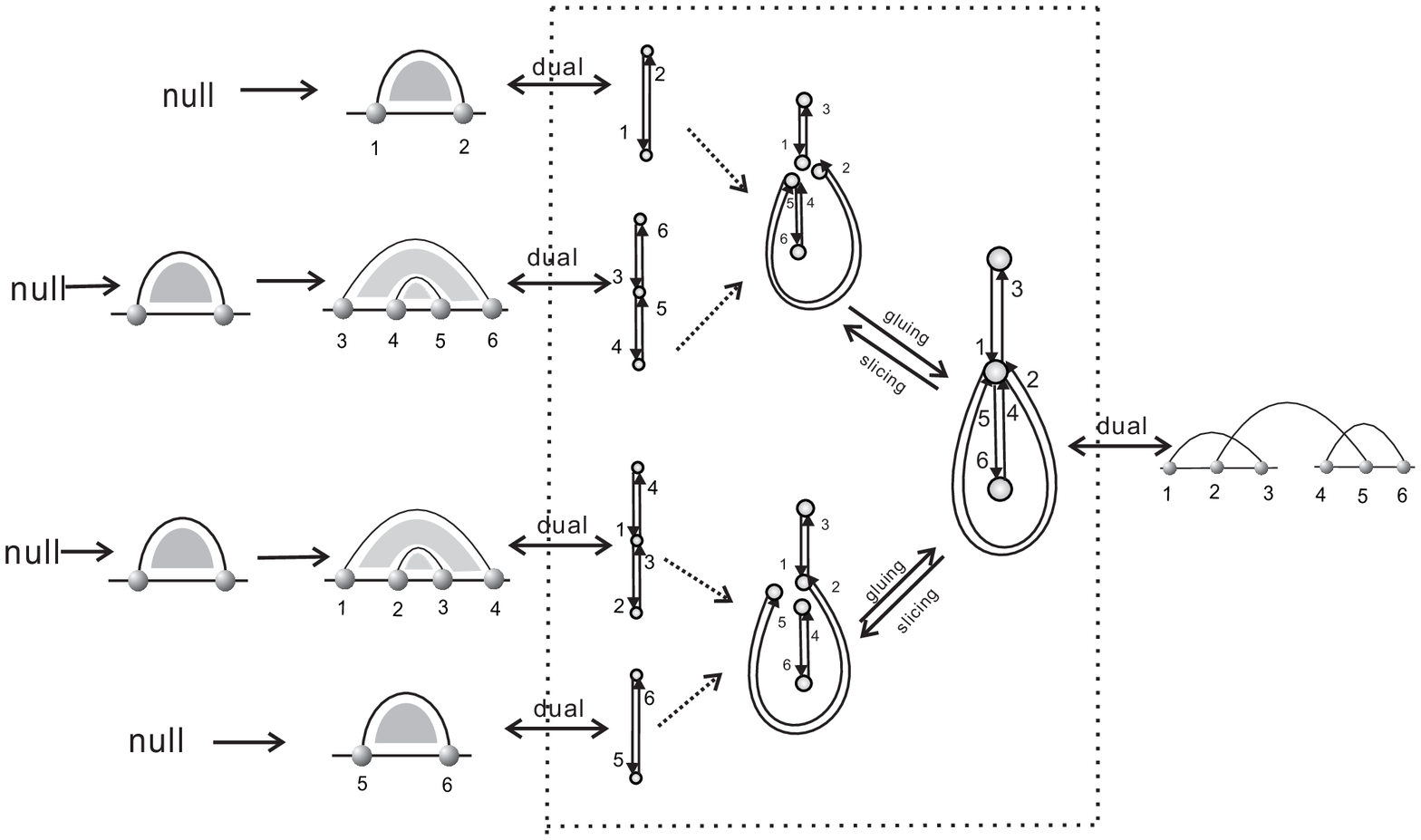}
\end{center}
\caption{ \small Howto generate the $E$-shape. left: 
generating the two secondary structures, middle: 
gluing the corresponding dual maps of the latter two, 
right: $E$-shape obtained by dualizing again. }
\label{F:genus001}
\end{figure}
%%%
%%%%%%%%%%%%%%%%%%%%%%%%%%%%%%%%%%%%%%%%%%%%%%%%%%%%%%%
%%%

It is important to note that our sampler is based on a two 
literally ``orthogonal'' compositions.  
The first is inductive in length and adds either unpaired vertices 
or arcs. There is no topological ``complexity'' in the structures. 
Point in case: any RNA secondary structures has genus $0$.
The second is inductive in either topological genus or connectedness
but adds neither vertices nor arcs. This induction is novel and 
substantially different from length based induction, See Fig. \ref{F:genus001}
for an illustration for the generation of $E$-shape of genus $0$. 

We shall proceed and study several statistics of loops in RNA-RNA interaction
structures, see Fig.~\ref{F:bb}.
%%%
%%%%%%%%%%%%%%%%%%%%%%%%%%%%%%%%%%%%%%%%%%%%%%%%%%%%%%%%%
%%%
\begin{figure}[H]
\begin{center}
\includegraphics[width=0.8\textwidth,height=0.4\textwidth]{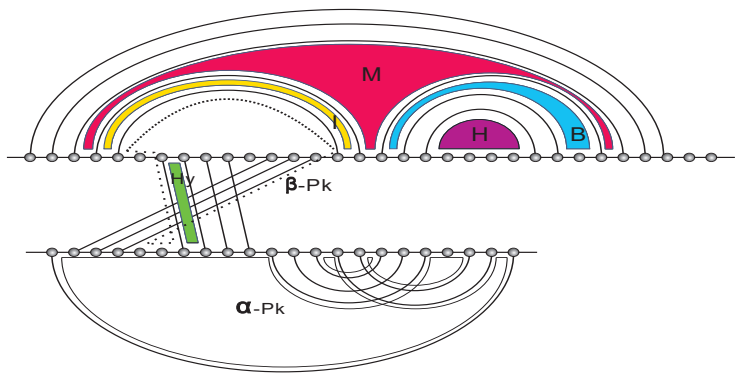}
\end{center}
\caption{ \small RNA-RNA interaction structure and its loops: multi-loop, $M$,
 interior loop, $I$, hairpin loop, $H$, bulge loop, $B$, exterior stack
 $H_y$,  pseudoknot loop over one backbone, $\alpha$-$Pk$, and 
 pseudoknot loop over two backbones $\beta$-$Pk$. }
\label{F:bb}
\end{figure}
%%%
%%%%%%%%%%%%%%%%%%%%%%%%%%%%%%%%%%%%%%%%%%%%%%%%%%%%%%%
%%%

We first present the distribution of loop types in interaction structures of 
genus $g$, see Fig.~\ref{F:bb}. 
We shall distinguish loops that contain only edges with endpoints on one backbone
($\alpha$-loops) and those that contain also edges connecting the two backbones 
($\beta$ loops), see Fig.~\ref{F:loop1} and \ref{F:loop2}.

%%%
%%%%%%%%%%%%%%%%%%%%%%%%%%%%%%%%%%%%%%%%%%%%%%%%%%%%%%%%%
%%%
\begin{figure}[H]
\begin{center}
\includegraphics[width=0.45\textwidth,height=0.45\textwidth]{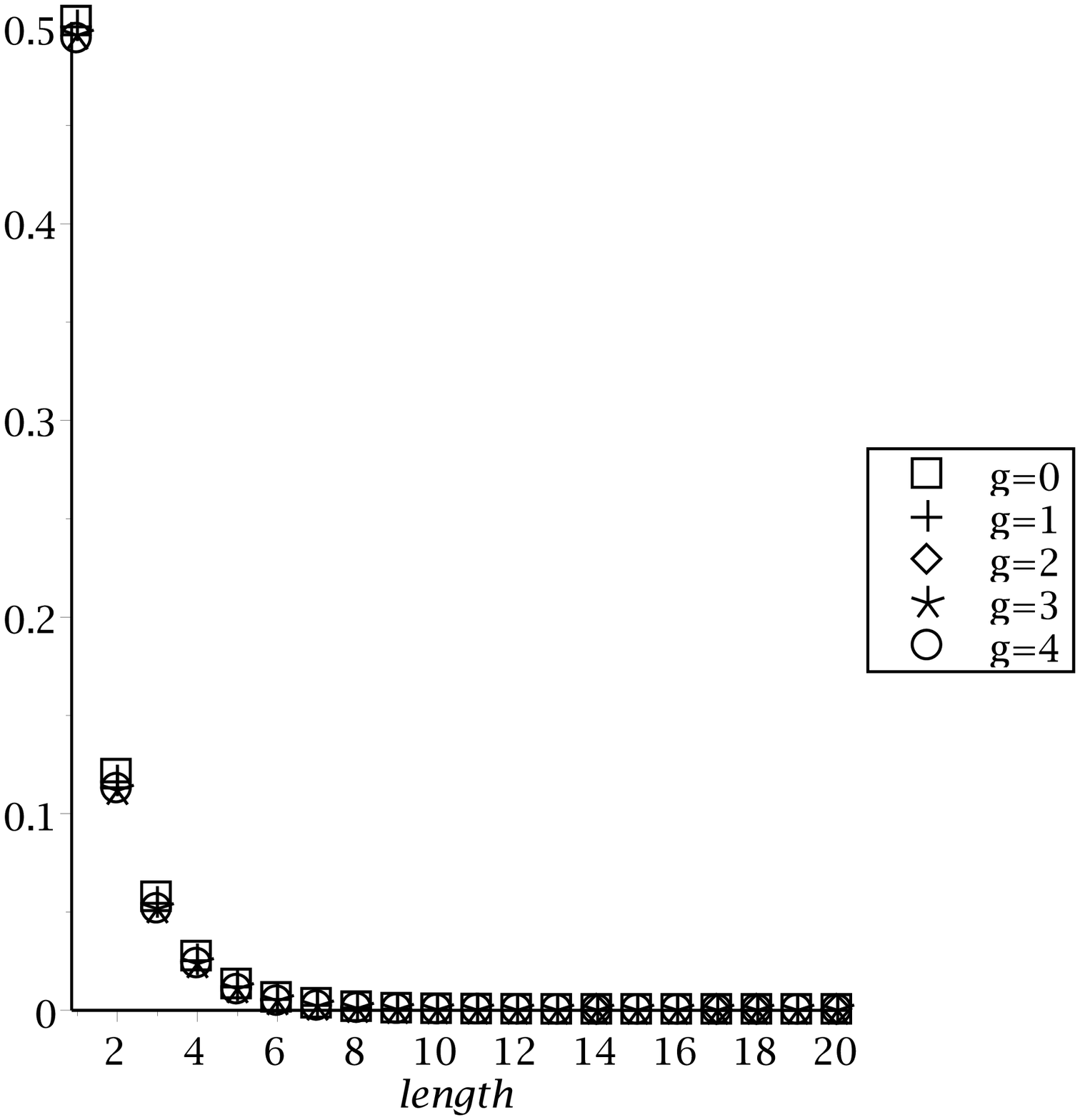}
\includegraphics[width=0.45\textwidth]{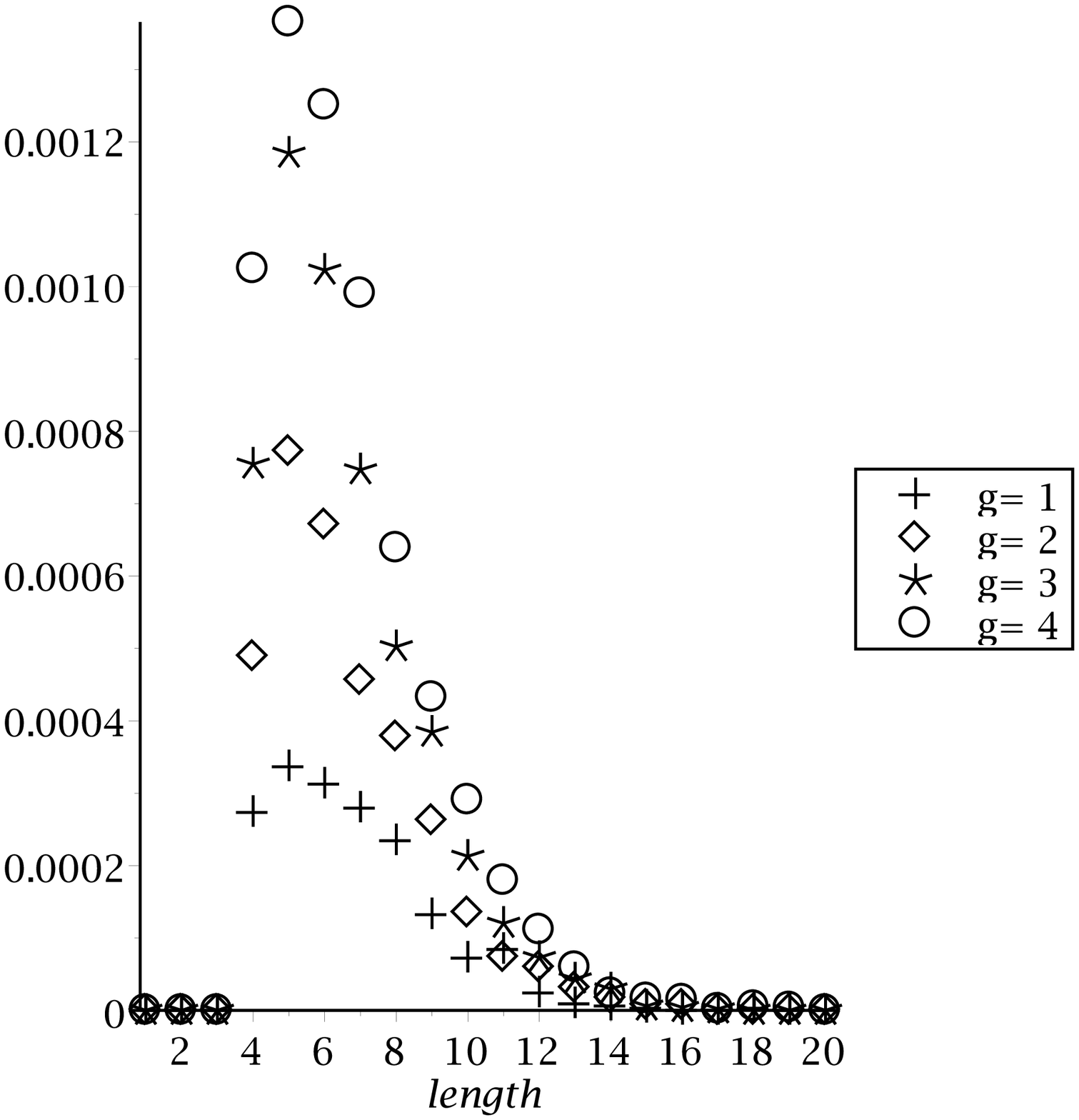}
\end{center}
\caption{ \small  The length of $\alpha$-loops in uniformly 
 generated, genus filtered, RNA structures. 
 Data are obtained from $10^5$ interaction structures of length $500$
 with genus ranging from $0$ to $4$, respectively. 
 left: distribution of standard $\alpha$-loops, where the $x$-axis represents 
 the length of the boundary component and $y$-axis denotes frequency. 
 right: distribution of pseudoknot $\alpha$-loops.}
\label{F:loop1}
\end{figure}
%%%
%%%%%%%%%%%%%%%%%%%%%%%%%%%%%%%%%%%%%%%%%%%%%%%%%%%%%%%
%%%

%%%%%%%%%%%%%%%%%%%%%%%%%%%%%%%%%%%%%%%%%%%%%%%%%%%%%%%%%
%%%
\begin{figure}[H]
\begin{center}
\includegraphics[width=0.45\textwidth,height=0.45\textwidth]{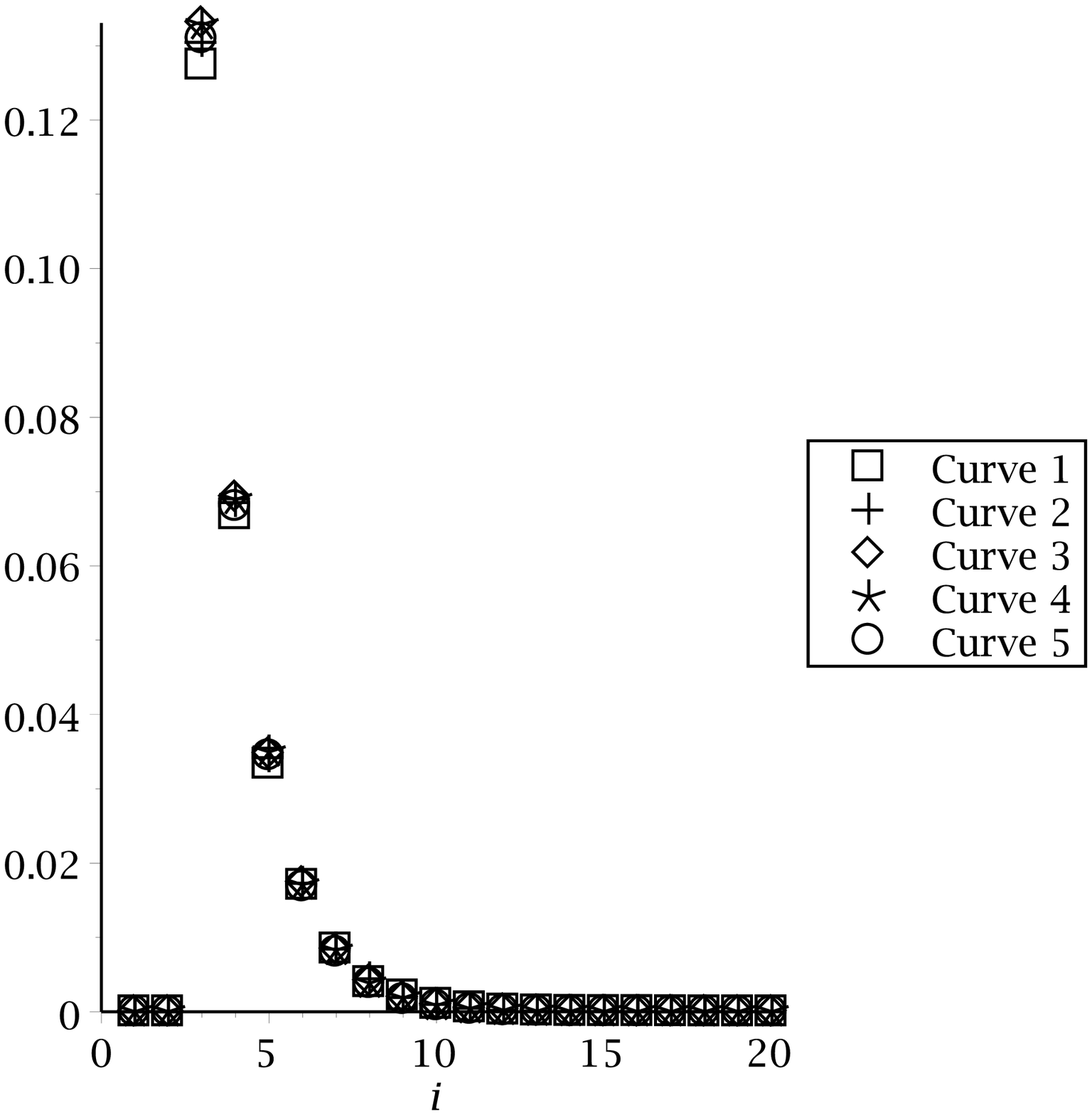}
\includegraphics[width=0.45\textwidth,height=0.45\textwidth]{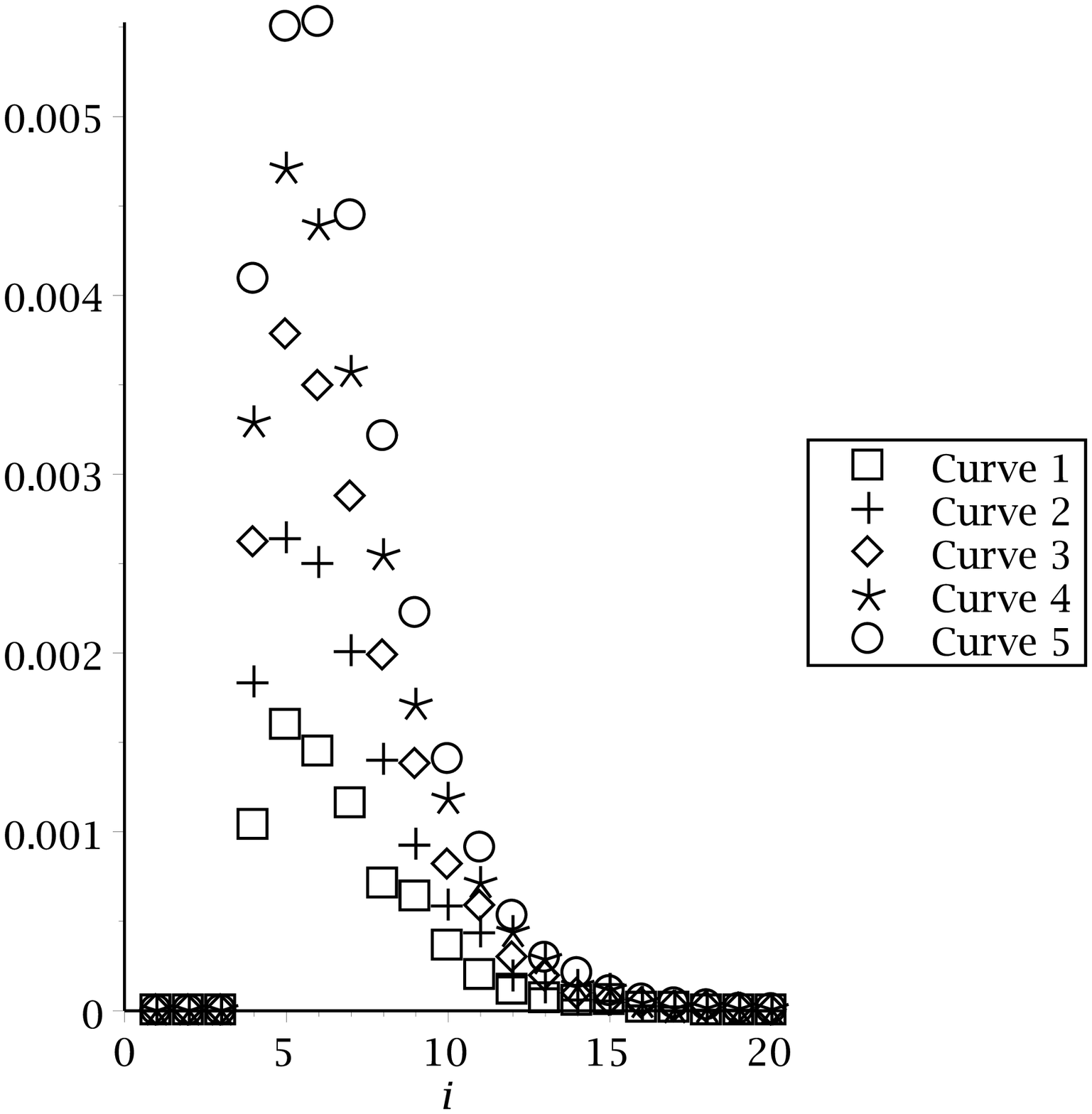}
\end{center}
\caption{ \small  The length-distribution of $\beta$-loops, sample and 
 set-up as in Fig.~\ref{F:loop1}.}
\label{F:loop2}
\end{figure}
%%%
%%%%%%%%%%%%%%%%%%%%%%%%%%%%%%%%%%%%%%%%%%%%%%%%%%%%%%%
%%%

Next we depict the distribution stack-length of uniform 
versus biological RNA-RNA interaction structures obtained from
\cite{Richter}. In both distributions we observe that lower stack 
length appears with high probability, see Fig.~\ref{F:stack1}.  

Finally we present the distribution of $\beta$ stacks versus
that of both, $\alpha$- and $\beta$-stacks, see Fig.~\ref{F:stack2}.
%%%%%%%%%%%%%%%%%%%%%%%%%%%%%%%%%%%%%%%%%%%%%%%%%%%%%%%%%
%%%
\begin{figure}[H]
\begin{center}
\includegraphics[width=0.45\textwidth,height=0.45\textwidth]{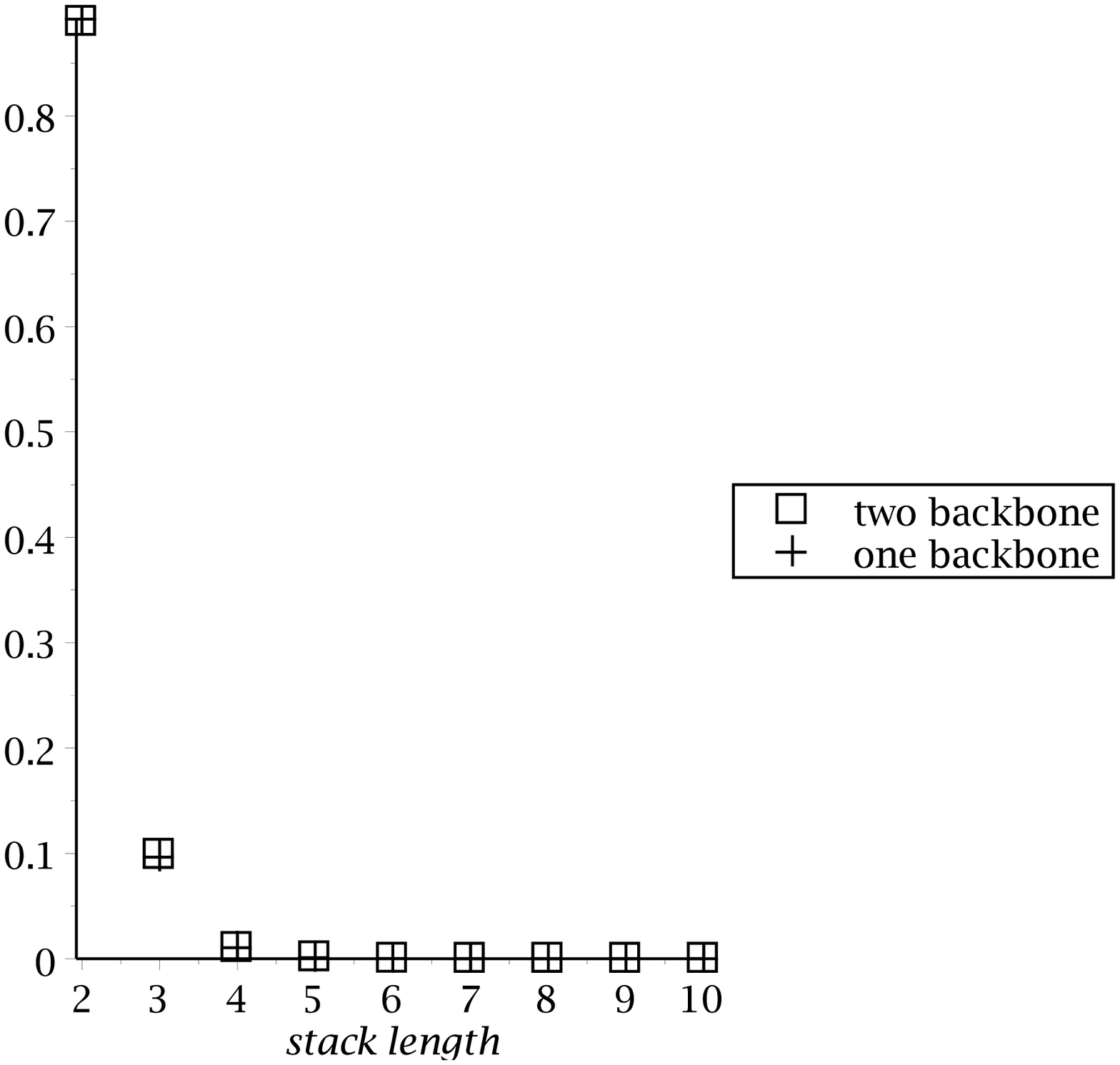}
\includegraphics[width=0.45\textwidth,height=0.45\textwidth]{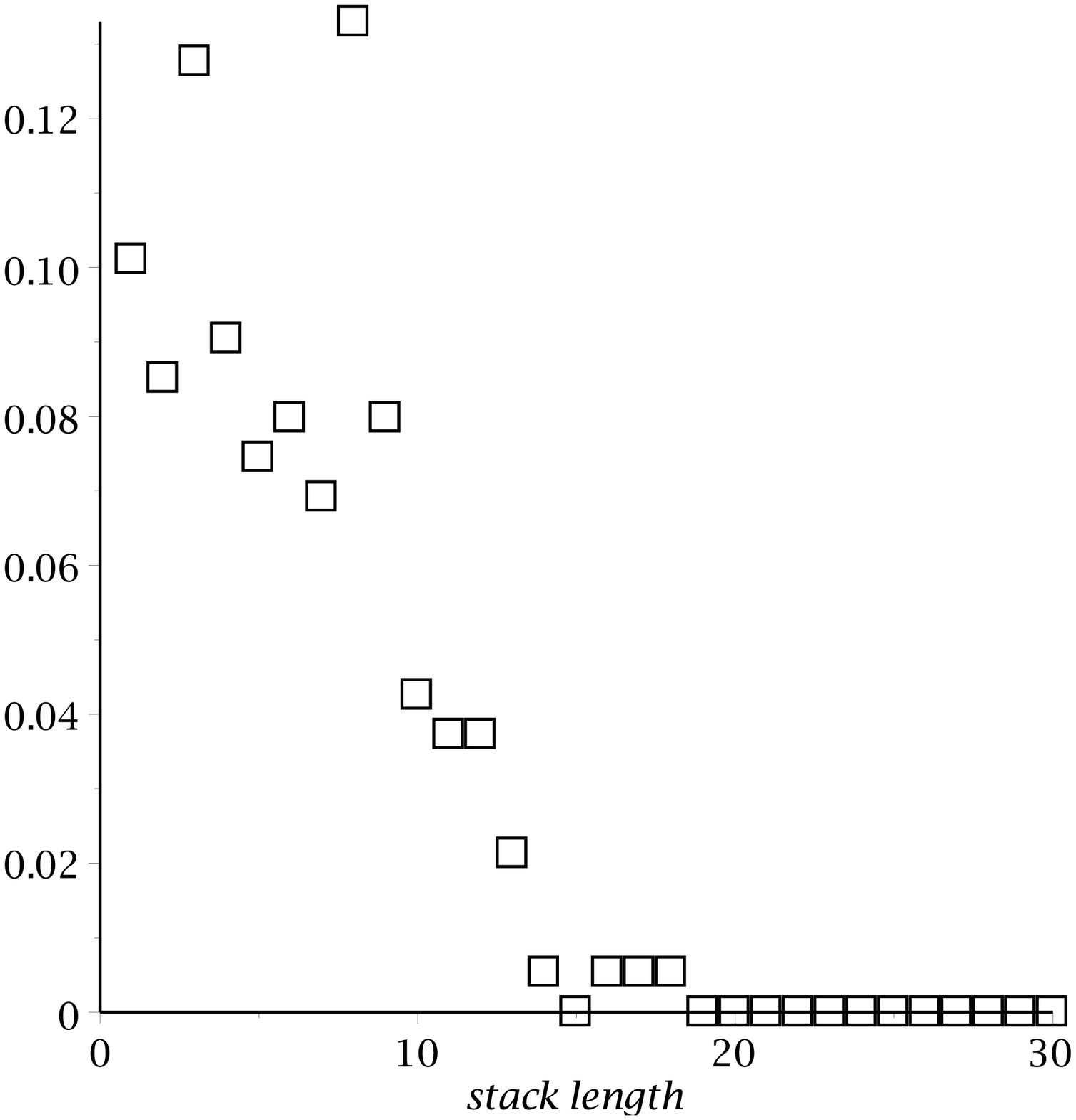}
\end{center}
\caption{ \small left: The distribution of the stack-length 
of $5\times 10^4$ uniformly generated genus $1$ interaction structures 
of length $500$.
right: Distribution of the stack length of biological structures \cite{Richter}.}
\label{F:stack1}
\end{figure}
%%%
%%%%%%%%%%%%%%%%%%%%%%%%%%%%%%%%%%%%%%%
%%%%%%%%%%%%%%%%%%%%%%%%%%%%%%%%%%%%%%%%%%%%%%%%%%%%%%%%%
%%%
\begin{figure}[h]
\begin{center}
\includegraphics[width=0.45\textwidth,height=0.45\textwidth]{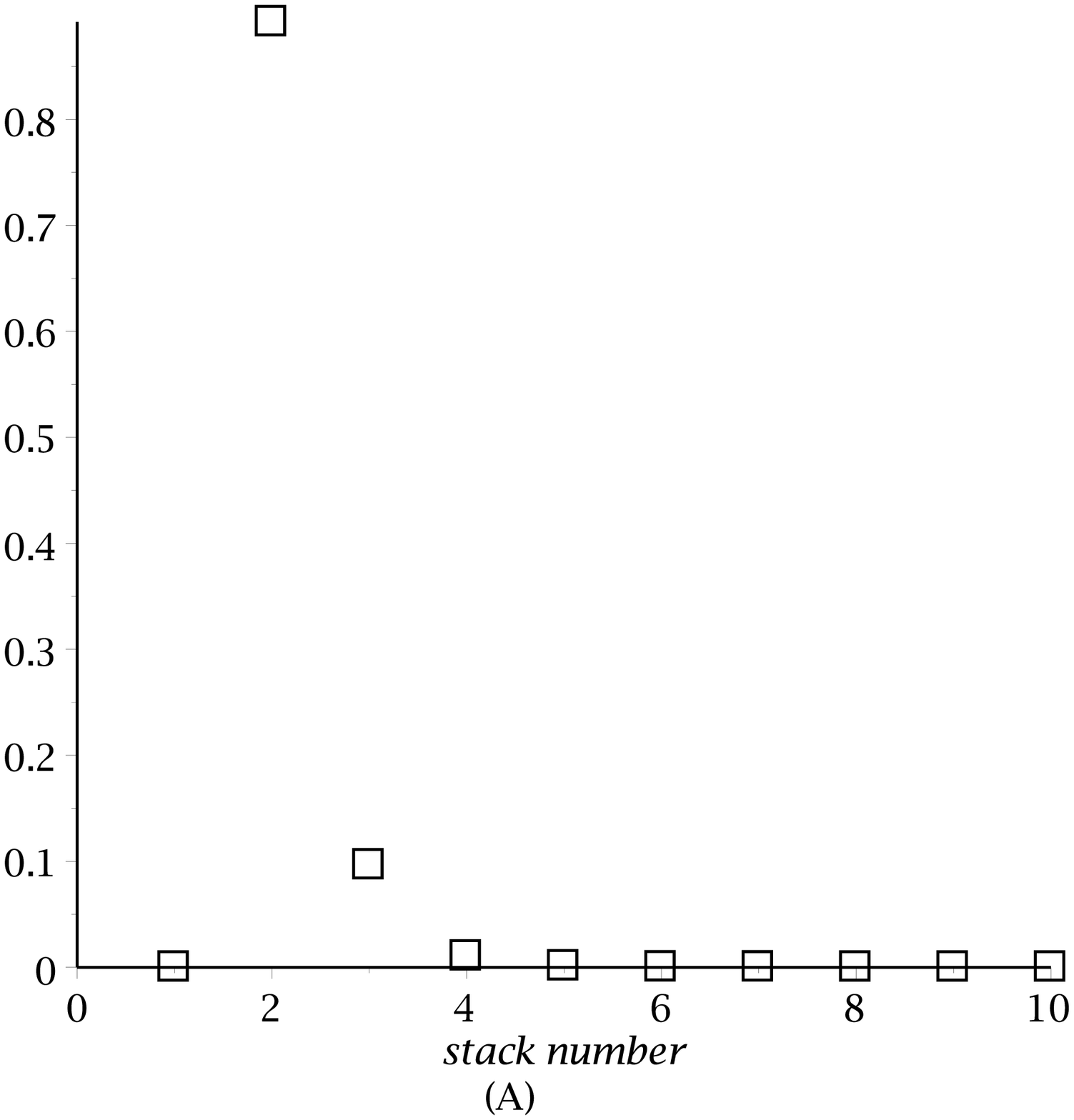}
\includegraphics[width=0.45\textwidth,height=0.45\textwidth]{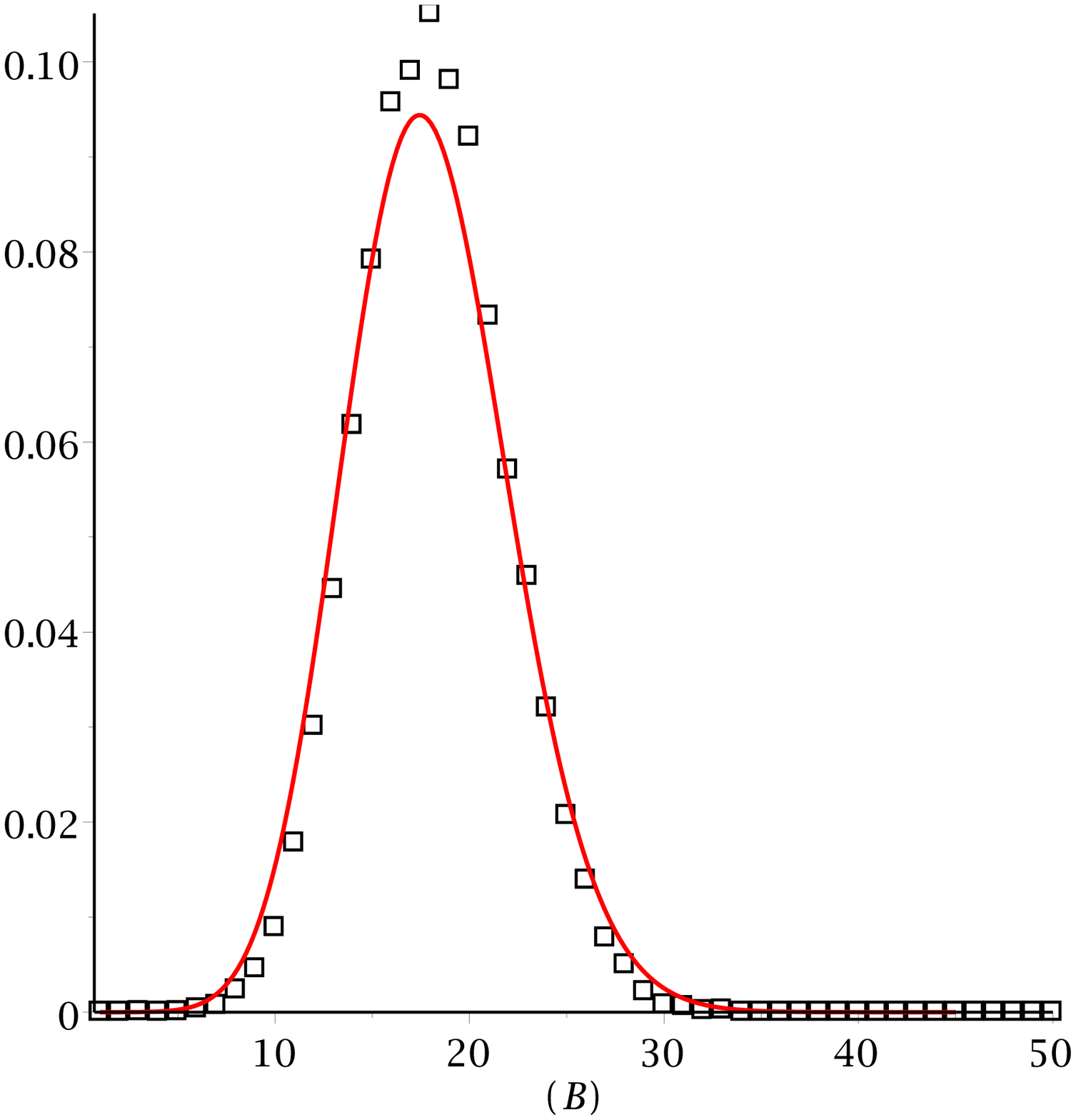}
\end{center}
\caption{ \small $\beta$-stacks versus all stacks: sampling
$10^5$ interaction structures of genus $1$, length $500$, we display:
left:  the distribution of the number of $\beta$ stacks, 
right: the distribution of the number of all stacks.}
\label{F:stack2}
\end{figure}
%%%
%%%%%%%%%%%%%%%%%%%%%%%%%%%%%%%%%%%%%%%

We have in Theorem~\ref{T:main} the blueprint for a novel,
multiple-context free grammar, generating unambiguously RNA-RNA interaction 
structures of genus $g$. This grammar is genuinely topological and can be 
used for a variety of applications. For instance it can be
tailored to produce not uniform but biological interaction structures by means 
of a training set taken from a database of RNA-RNA interaction structures. 
As is standard in stochastic-(multiple) context free grammars, this training set provides 
the probabilities of the rules. It would be then possible to statistically validate
the finding by comparing the derived loop-size statistics from biased 
sampling with that of biological interaction structures.
Another interesting application would arise in the context of functional anotation, 
where via sequencing sites that encode specific pseudoknot RNA like telomerases.
The key objective is the development of local descriptors, as suitable
input for efficient, genome-wide search, which requires deeper, conceptional
understanding of RNA pseudoknots.

%%%
%%%%%%%%%%%%%%%%%%%%%%%%%%%%%%%%%%%%%%%%%%%%%%%%%%%%%%
%%%

{\bf Author contributions}
Hillary S.W. Han obtained an arithmetic proof of Eq.~(\ref{E:2g+2}) based on
\citep{Chapuy:11} and \citep{Reidys-Han} and generated
Figures~(\ref{F:slice1}$(2)$, \ref{F:slice2}, \ref{F:slice3}$(4)$, \ref{F:inflation},
\ref{F:fatgraph}, \ref{F:dual}, \ref{F:case1}, \ref{F:case2},
\ref{F:bb}). Eq.~(\ref{E:induction2}) was 
jointly derived by all authors.
Benjamin M.M.Fu and Christian M.~Reidys derived the bijections, designed
the algorithms, the statistical results and wrote the paper.

{\bf Acknowledgements}
We wish to thank Fenix W.D. Huang and Thomas J.X. Li for discussions.
This work is funded by the Future and Emerging Technologies (FET) programme of the European Commission within the Seventh Framework Programme (FP7), under the
FET-Proactive grant agreement TOPDRIM, FP7-ICT-318121.
\section*{References}

\bibliography{reference}

\begin{thebibliography}{10}
\expandafter\ifx\csname url\endcsname\relax
  \def\url#1{\texttt{#1}}\fi
\expandafter\ifx\csname urlprefix\endcsname\relax\def\urlprefix{URL }\fi
\expandafter\ifx\csname href\endcsname\relax
  \def\href#1#2{#2} \def\path#1{#1}\fi

\bibitem{Vogel:07}
F.~Narberhaus, J.~Vogel, Sensory and regulatory {RNA}s in prokaryotes: {A} new
  german research focus, RNA biology 4~(3) (2007) 160--164.

\bibitem{McManus}
M.~T. McManus, P.~A. Sharp, Gene silencing in mammals by small interfering
  {RNA}s, Nature reviews genetics 3~(10) (2002) 737--747.

\bibitem{Banerjee}
D.~Banerjee, F.~Slack, Control of developmental timing by small temporal
  {RNA}s: a paradigm for {RNA}--mediated regulation of gene expression,
  Bioessays 24~(2) (2002) 119--129.

\bibitem{Bachellerie}
J.-P. Bachellerie, J.~Cavaill{\'e}, A.~H{\"u}ttenhofer, The expanding sno{RNA}
  world, Biochimie 84~(8) (2002) 775--790.

\bibitem{Benne}
R.~Benne, {RNA}--editing in trypanosome mitochondria, Biochimica et Biophysica
  Acta (BBA)-Gene Structure and Expression 1007~(2) (1989) 131--139.

\bibitem{Kugel}
J.~F. Kugel, J.~A. Goodrich, An {RNA} transcriptional regulator templates its
  own regulatory {RNA}, Nature chemical biology 3~(2) (2007) 89--90.

\bibitem{Kleitman:70}
D.~Kleitman, Proportions of {I}rreducible {D}iagrams, Studies in Appl. Math. 49
  (1970) 297--299.

\bibitem{Nussinov:1978}
R.~Nussinov, G.~Pieczenik, J.~R. Griggs, D.~J. Kleitman, Algorithms for loop
  matchings, SIAM Journal on Applied mathematics 35~(1) (1978) 68--82.

\bibitem{Waterman:78a}
M.~S. Waterman, Secondary structure of single--stranded nucleic acids, Adv.\
  Math.\ (Suppl.\ Studies) 1 (1978) 167--212.

\bibitem{Waterman:79a}
M.~S. Waterman, Combinatorics of {RNA} hairpins and cloverleaves, Studies Appl.
  Math 60 (1978) 91--96.

\bibitem{Rivas:99}
E.~Rivas, S.~R. Eddy, A dynamic programming algorithm for {RNA} structure
  prediction including pseudoknots, J.\ Mol.\ Biol. 285 (1999) 2053--2068.

\bibitem{Lyngso}
R.~B. Lyngs{\o}, C.~N. Pedersen, Pseudoknots in {RNA} secondary structures, in:
  Proceedings of the fourth annual international conference on Computational
  molecular biology, ACM, 2000, pp. 201--209.

\bibitem{vernizzi}
G.~Vernizzi, H.~Orland, Large--{N} random matrices for {RNA} folding, Acta
  PhysicA PolonicA Series B 36~(9) (2005) 2821.

\bibitem{vernizzib}
G.~Vernizzi, H.~Orland, A.~Zee, Enumeration of {RNA} structures by matrix
  models, Physical review letters 94~(16) (2005) 168103.

\bibitem{bon}
M.~Bon, G.~Vernizzi, H.~Orland, A.~Zee, Topological classification of {RNA}
  structures, Journal of molecular biology 379~(4) (2008) 900--911.

\bibitem{andersen2011}
J.~E. Andersen, R.~C. Penner, C.~M. Reidys, M.~S. Waterman, Topological
  classification and enumeration of {RNA} structures by genus, Journal of
  mathematical biology (2011) 1--18.

\bibitem{Orland:02}
H.~Orland, A.~Zee, {RNA} folding and large $n$ matrix theory, Nuclear Physics B
  620 (2002) 456--476.

\bibitem{Reidys:11a}
C.~M. Reidys, F.~Huang, J.~E. Andersen, R.~C. Penner, P.~F. Stadler, M.~E.
  Nebel, Topology and prediction of {RNA} pseudoknots, Bioinformatics 27 (2011)
  1076--1085.

\bibitem{Reidys:10w}
C.~M. Reidys, R.~R. Wang, A.~Y. Zhao, Modular, k-noncrossing diagrams, the
  electronic journal of combinatorics 17~(R76) (2010) 1.

\bibitem{Bon:08}
M.~Bon, G.~Vernizzi, H.~Orland, A.~Zee, Topological classification of {RNA}
  structures, J.\ Mol.\ Biol. 379 (2008) 900--911.

\bibitem{Waterman:93}
R.~Penner, M.~S. Waterman, Spaces of {RNA} secondary structures, Advances in
  Mathematics 101~(1) (1993) 31--49.

\bibitem{Penner:03}
R.~C. Penner, Cell decomposition and compactification of {R}iemann's moduli
  space in decorated {T}eichm{\"u}ller theory, in: N.~Tongring, R.~C. Penner
  (Eds.), Woods Hole Mathematics-perspectives in math and physics, World
  Scientific, Singapore, 2004, pp. 263--301, arXiv: math.GT/0306190.

\bibitem{protein}
R.~C. Penner, M.~Knudsen, C.~Wiuf, J.~E. Andersen, Fatgraph models of proteins,
  Comm.\ Pure Appl.\ Math. 63 (2010) 1249--1297.

\bibitem{Fenix:t2}
J.~E. Andersen, F.~W. Huang, R.~C. Penner, C.~M. Reidys, Topology of
  {RNA}-{RNA} interaction structures, Journal of Computational Biology 19~(7)
  (2012) 928--943.

\bibitem{chapuy:10}
G.~Chapuy, The structure of unicellular maps, and a connection between maps of
  positive genus and planar labelled trees, Probability Theory and Related
  Fields 147~(3) (2010) 415--447.

\bibitem{Chapuy:11}
G.~Chapuy, A new combinatorial identity for unicellular maps, via a direct
  bijective approach, Adv. Appl. Math. 47(4) (2011) 874--893.

\bibitem{fenix}
F.~W. Huang, M.~E. Nebel, C.~M. Reidys, Generation of {RNA} pseudoknot
  structures with topological genus filtration, Mathematical biosciences 245
  (2013) 216--225.

\bibitem{fenix-shape}
F.~W. Huang, C.~M. Reidys, Shapes of topological {RNA} structures,
  arXiv:1403.2908.

\bibitem{Richter}
A.~S. Richter, R.~Backofen, Accessibility and conservation: General features of
  bacterial small {RNA}--m{RNA} interactions?, RNA Biology 9 (2012) 954--965.

\bibitem{Duchon}
P.~Duchon, P.~Flajolet, G.~Louchard, G.~Schaeffer, Boltzmann samplers for the
  random generation of combinatorial structures, Comb. Probab. Comput. 13~(4-5)
  (2004) 577--625.

\bibitem{zuker1999}
D.~H. Mathews, J.~Sabina, M.~Zuker, D.~H. Turner, Expanded sequence dependence
  of thermodynamic parameters improves prediction of {RNA} secondary structure,
  J. Mol. Biol. 288 (1999) 911--940.

\bibitem{waterman1978}
M.~S. Waterman, Combinatorics of {RNA} hairpins and cloverleaves, Studies Appl.
  Math 60 (1978) 91--96.

\bibitem{waterman1978secondary}
M.~S. Waterman, Secondary structure of single--stranded nucleic acids, Adv.
  math. suppl. studies 1 (1978) 167--212.

\bibitem{zuker1984rna}
M.~Zuker, D.~Sankoff, {RNA} secondary structures and their prediction, Bulletin
  of mathematical biology 46~(4) (1984) 591--621.

\bibitem{nussin}
R.~Nussinov, G.~Pieczenik, J.~R. Griggs, D.~J. Kleitman, Algorithms for loop
  matchings, SIAM Journal on Applied mathematics 35~(1) (1978) 68--82.

\bibitem{kleitman1970}
D.~Kleitman, Proportions of irreducible diagrams, Studies in Applied
  Mathematics 49~(3) (1970) 297.

\bibitem{Reidys-Han}
H.~S. Han, C.~Reidys, A bijection between unicellular and bicellular maps,
  arXiv:1301.7177.

\end{thebibliography}

\end{document}